\def\PZdefchar#1{
  \expandafter\def\csname frak#1\endcsname{\mathfrak{#1}}
  \expandafter\def\csname bf#1\endcsname{\mathbf{#1}}
  \expandafter\def\csname scr#1\endcsname{\mathcal{#1}}
  \expandafter\def\csname cal#1\endcsname{\mathcal{#1}}}
\def\PZdefloop#1{\ifx#1\PZdefloop\else\PZdefchar#1\expandafter\PZdefloop\fi}
\newcommand{\R}{\mathbb{R}}
\newcommand{\Z}{\mathbb{Z}}
\newcommand{\one}{\mathbf{1}}
\numberwithin{equation}{section}
\newtheorem{theorem}{Theorem}
\numberwithin{theorem}{section}
\newtheorem{proposition}[theorem]{Proposition}
\newtheorem{lemma}[theorem]{Lemma}
\newtheorem{corollary}[theorem]{Corollary}
\theoremstyle{definition}
\newtheorem{notation}[theorem]{Notation}
\theoremstyle{remark}
\newtheorem{remark}[theorem]{Remark}
\newtheorem{example}[theorem]{Example}
\DeclarePairedDelimiter\abs{\lvert}{\rvert}
\DeclarePairedDelimiter\norm{\lVert}{\rVert}
\providecommand\given{}
\newcommand\SetSymbol[1][]{%
\nonscript\:#1\vert
\allowbreak
\nonscript\:
\mathopen{}}
\DeclarePairedDelimiterX\Set[1]\{\}{\renewcommand\given{\SetSymbol[\delimsize]}#1}
\DeclarePairedDelimiterXPP\EE[1]{\E}{\lparen}{\rparen}{}{\renewcommand\given{\SetSymbol[\delimsize]}#1} % Conditional expectation \EE{ f \given A }
\newcommand\@avsum[2]{%
  {\sbox0{$\m@th#1\sum$}%
   \vphantom{\usebox0}%
   \ooalign{%
     \hidewidth
     \smash{\vrule height\dimexpr\ht0+1pt\relax depth\dimexpr\dp0+1pt\relax}%
     \hidewidth\cr
     $\m@th#1\sum$\cr
   }%
  }%
}
\newcommand{\avsum}{\mathop{\mathpalette\@avsum\relax}\displaylimits}
\newcommand\@avprod[2]{%
  {\sbox0{$\m@th#1\prod$}%
   \vphantom{\usebox0}%
   \ooalign{%
     \hidewidth
     \smash{\vrule height\dimexpr\ht0+1pt\relax depth\dimexpr\dp0+1pt\relax}%
     \hidewidth\cr
     $\m@th#1\prod$\cr
   }%
  }%
}
\newcommand{\avprod}{\mathop{\mathpalette\@avprod\relax}\displaylimits}
\newcommand{\@avL}[2]{%
\ooalign{{$\m@th#1\mbox{--}$}\cr {$\m@th#1 L$}\cr}}
\newcommand{\avL}{\mathpalette\@avL\relax}
\newcommand{\@avell}[2]{%
\ooalign{{$\m@th#1\mbox{--}$}\cr {$\m@th#1 \ell$}\cr}}
\newcommand{\avell}{\mathpalette\@avell\relax}
\newcommand{\@avD}{%
  \ooalign{{$\mathrm{D}$}\cr \hidewidth\raise.2ex\hbox{$\vert$}\hidewidth\cr}}
\newcommand{\avDec}{\@avD\mathrm{ec}}
\newcommand{\Dec}{\mathcal{D}} % Decoupling constant
\DeclareMathOperator{\lin}{lin}
\DeclareMathOperator{\supp}{supp}
\newcommand{\Part}[2][]{\calP(\ifstrempty{#1}{}{#1,}#2)} % Partition of #1 at scale #2.
\newcommand{\unitint}{{[0,1]}}
\newcommand{\unitsquare}{{[0,1]^2}}
\newcommand{\unitcube}{{[0,1]^3}}
\def\lesim{\lesssim}
\def\mc{\mathcal}
\def\bt{\mathbf{t}}
\begin{document}

\title[Decoupling for two quadratic forms in three variables]{Decoupling for two quadratic forms in three variables: a complete characterization}
\author[S.~Guo]{Shaoming Guo}
\address[SG]{Department of Mathematics\\ University of Wisconsin-Madison}
\email{shaomingguo@math.wisc.edu}
\author[C.~Oh]{Changkeun Oh}
\address[CO]{Department of Mathematics\\ University of Wisconsin-Madison}
\email{coh28@wisc.edu}
\author[J.~Roos]{Joris Roos}
\address[JR]{Department of Mathematics\\ University of Wisconsin-Madison}
\email{jroos.math@gmail.com}
\curraddr{Department of Mathematical Sciences \\ University of Massachusetts Lowell}
\author[P.-L.~Yung]{Po-Lam Yung}
\address[PY]{Department of Mathematics\\ The Chinese University of Hong Kong
  \newline and Mathematical Sciences Institute\\ The Australian National University}
\email{plyung@math.cuhk.edu.hk}
\email{polam.yung@anu.edu.au}
\author[P.~Zorin-Kranich]{Pavel Zorin-Kranich}
\address[PZ]{Mathematical Institute\\ University of Bonn}
\email{pzorin@math.uni-bonn.de}
\subjclass[2010]{42B25 (Primary) 11L15, 26D05 (Secondary)}

\begin{abstract}
We prove sharp decoupling inequalities for all degenerate surfaces of codimension two in $\mathbb{R}^5$ given by two quadratic forms in three variables.
Together with previous work by Demeter, Guo, and Shi in the non-degenerate case, this provides a classification of decoupling inequalities for pairs of quadratic forms in three variables.
\end{abstract}

\maketitle

\section{Introduction}

We begin by recalling the definition of decoupling constants.
Let $d, n\ge 1$ be integers.
For real quadratic forms $Q_1, \dotsc, Q_n$ in $d$ variables, consider the surface
\begin{equation}
\label{eq:S0}
\mc{S}_0
=
\Set{ (\bt, Q_1(\bt), \dotsc, Q_n(\bt)) \given \bt\in [0, 1]^d }.
\end{equation}
For a dyadic cube $\Box\subset [0, 1]^d$ with side length $\delta$, we will use $f_{\Box}$ to denote a function with
\begin{equation}
\label{eq:fourier-supp}
\supp(\widehat{f_{\Box}})
\subset
\Set{ (\bt, Q_1(\bt)+\delta^{(1)}, \dots, Q_n(\bt)+\delta^{(n)}) \given \bt\in \Box, \abs{\delta^{(1)}},\dotsc,\abs{\delta^{(n)}}\le \delta^2 }.
\end{equation}
For $2 \leq p < \infty$ and a dyadic number $\delta \in (0,1)$, the \emph{decoupling constant} $\Dec_{\mc{S}_0}(\delta, p)$ is the smallest constant $A$ such that the inequality
\begin{equation}
\label{eq:dec-const}
\norm[\Big]{\sum_{\Box \in \Part{\delta}} f_{\Box} }_{L^p(\R^{d+n})}
\le A
\Big(\sum_{\Box \in \Part{\delta}} \norm{f_{\Box}}_{L^p(\R^{d+n})}^p \Big)^{1/p},
\end{equation}
where $\Part{\delta}$ is the partition of $[0,1]^{d}$ into dyadic cubes with side length $\delta$, holds for every choice of functions $f_{\Box}$ satisfying \eqref{eq:fourier-supp}; replacing the $\ell^p L^p$ norm on the right hand side of \eqref{eq:dec-const} by $\ell^{\infty} L^{\infty}$ gives the definition when $p = \infty$.

In this article we are interested in the case $d=3$, $n=2$.  We will also use existing results for smaller values of $d$ and $n$, which necessitates defining \eqref{eq:dec-const} in more generality.
We will denote by $(P,Q)$ a pair of real quadratic forms in three variables and by $\mc{S}$ the surface
\begin{equation}\label{S_def}
\mc{S} := \Set{ (r, s, t, P(r, s, t), Q(r, s, t)) \given (r, s, t)\in [0, 1]^3 }.
\end{equation}
Our goal is to prove, for every $2 \leq p < \infty$, an essentially sharp bound on $\Dec_{P,Q}(\delta,p) := \Dec_{\mc{S}}(\delta, p)$ as $\delta\to 0$.
In order to formulate our results in a concise way, we introduce the \emph{sharp decoupling exponent}
\begin{equation}
\gamma_{P,Q}(p) := \inf \Set{ \gamma\geq 0 \given \Dec_{P,Q}(\delta,p) \lesssim \delta^{-\gamma} }.
\end{equation}
In other words, $\gamma_{P,Q}(p)$ is the smallest exponent $\gamma$ such that, for every $\epsilon>0$, we have
\begin{equation}
\label{eq:dec-ineq}
\Dec_{P,Q}(\delta,p) \lesssim_{\epsilon} \delta^{-\gamma-\epsilon}
\text{ for every } \delta \in (0,1).
\end{equation}

\subsection{Previous results}
The case of linearly dependent $P$ and $Q$ is equivalent to the case $n=1$ of \eqref{eq:S0}.
In this case, sharp decoupling inequalities were proved by Bourgain and Demeter \cite[Theorem 1.1]{MR3736493}.
Henceforth, we assume that $P$ and $Q$ are linearly independent.

Moreover, we assume that there is no linear change of variables in $(r, s, t)$ such that $P$ and $Q$ both omit one of the variables, as otherwise we can reduce to the case $d=n=2$ that was considered by Bourgain and Demeter in \cite{BD16}.

We say that the pair $(P,Q)$ is \emph{non-degenerate} if both of the following conditions hold:
\begin{equation}
\label{eq:det-condition}
\det( \nabla P,\nabla Q ,u) \not\equiv 0 \text{ for all } u \in \R^3\setminus\{0\},
\end{equation}
\begin{equation}
\label{eq:no-common-factor}
\text{$P$ and $Q$ do not share a common linear real factor.}
\end{equation}

Examples show that, for any pair $(P,Q)$ of quadratic polynomials in $3$ variables, the sharp decoupling exponent satisfies
\begin{equation}
\label{non-deg-decexp}
\gamma_{P,Q}(p) \geq
\begin{cases}
3(\frac 1 2-\frac 1 p) & \text{ if } 2\le p\le 14/3,\\
3-\frac{10}p & \text{ if } 14/3\le p\le \infty,
\end{cases}
\end{equation}
see \eqref{eq:16}.
In the non-degenerate case, Demeter, Shi, and the first author \cite{MR3945730} (see also \cite{GZo19} for a simplified proof) proved that, in fact,
\begin{equation}\label{DGS-decexp}
\gamma_{P,Q}(p) =
\begin{cases}
3(\frac 1 2-\frac 1 p) & \text{ if } 2\le p\le 14/3,\\
3-\frac{10}p & \text{ if } 14/3\le p\le \infty.
\end{cases}
\end{equation}
Therefore, it is a natural question to find the minimal requirements for $P$ and $Q$ such that the decoupling inequality \eqref{eq:dec-ineq} holds with the smallest possible sharp decoupling exponent \eqref{DGS-decexp}.

It was pointed out in \cite{MR3945730} that \eqref{eq:no-common-factor} is a necessary condition for \eqref{DGS-decexp} to hold: If $P$ and $Q$ share a common linear factor, then the surface $\mc{S}$ given by \eqref{S_def} is flat on the hyperplane determined by that common linear factor, and therefore we cannot expect any satisfactory decoupling inequality like \eqref{DGS-decexp}.
For a more precise argument, see \eqref{eq:20}.

It was also observed in \cite[Appendix]{MR3945730} that \eqref{eq:det-condition} is necessary for the multilinear approach to proving \eqref{DGS-decexp}, which originates in \cite{MR3374964,MR3548534}, to work.
More precisely, if \eqref{DGS-decexp} fails, then no collection of tangent spaces to the surface $\mc{S}$ can satisfy the transversality conditions in \cite[Theorem 1.15, (c)]{MR2377493} that characterize validity of Brascamp--Lieb inequalities, on which the multilinear approach relies crucially.
However, the question whether the assumption \eqref{eq:det-condition} is necessary for \eqref{DGS-decexp} was left open. 

In the current paper, we will give an affirmative answer to the above question, that is, we will prove that \eqref{DGS-decexp} holds if and only if $P$ and $Q$ satisfy \eqref{eq:det-condition} and \eqref{eq:no-common-factor}.
To this end, we will find the sharp decoupling exponents for every degenerate pair $(P, Q)$.
We will see that, for any degenerate pair $(P, Q)$, the sharp decoupling exponents are strictly larger than \eqref{DGS-decexp} in some range of $p$'s.

\subsection{Classification of pairs $(P,Q)$}
We say that two pairs of quadratic forms $(P, Q)$ and $(P', Q')$ are equivalent if there exist two invertible real matrices $L_1\in M_{3\times 3}$ and $L_2\in M_{2\times 2}$ such that
\begin{equation}
\label{L1L2}
(P'(r, s, t), Q'(r, s, t))
= L_2\cdot (P(L_1\cdot (r, s, t)), Q(L_1\cdot (r, s, t))).
\end{equation}
This defines an equivalence relation, which we denote by
\begin{equation}
(P, Q)\equiv (P', Q').
\end{equation}
By changing coordinates, it is easy to see that
\begin{equation}
\Dec_{P, Q}(\delta, p)\approx \Dec_{P', Q'}(\delta, p),
\end{equation}
with an implicit constant depending only on $L_{1}$ and $L_{2}$ in \eqref{L1L2}, and in particular $\gamma_{P,Q}(p) = \gamma_{P',Q'}(p)$.
The following result describes all possible sharp decoupling exponents for two quadratic forms in three variables that do not omit any variable.

\begin{theorem}
\label{thm:classification+exponents}
Let $(P, Q)$ be a pair of linearly independent quadratic forms.
Assume that there is no linear change of variables in $(r, s, t)$ after which $P$ and $Q$ both omit one of the variables.
Then exactly one of the following alternatives holds.
\begin{enumerate}
\item\label{it:class:non-deg} $(P,Q)$ is non-degenerate, that is, both \eqref{eq:det-condition} and \eqref{eq:no-common-factor} hold.
In this case, the sharp decoupling exponent is given by \eqref{DGS-decexp}.
\item\label{it:class:common-factor} \eqref{eq:det-condition} holds, but \eqref{eq:no-common-factor} fails.
In this case, $(P, Q) \equiv (rs, rt)$, and
\begin{equation}
\label{rsrt_decexp}
\gamma_{P,Q}(p)
=
\begin{cases}
2-\frac 4 p & \text{ if } 2\le p\le 6,\\
3-\frac{10}p & \text{ if } 6\le p\le \infty.
\end{cases}
\end{equation}
\item\label{it:complete-square} \eqref{eq:det-condition} fails, but \eqref{eq:no-common-factor} holds.
In this case, either $(P,Q) \equiv (r^2, s^2\pm t^2)$, or $(P, Q) \equiv (r^2, s^2+rt)$.
In both subcases,
\begin{equation}
\label{r2s2rt_decexp}
\gamma_{P,Q}(p) =
\begin{cases}
3(\frac 1 2-\frac 1 p) & \text{ if } 2\le p\le 4,\\
\frac{5}{2}-\frac{7}{p} & \text{ if } 4\le p\le 6,\\
3-\frac{10}p & \text{ if } 6\le p\le \infty.
\end{cases}
\end{equation}
\end{enumerate}
\end{theorem}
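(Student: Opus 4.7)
The theorem naturally splits along its three alternatives.  Case \eqref{it:class:non-deg} is the content of \cite{MR3945730} (with a simplified proof in \cite{GZo19}), so the real work is, for each of \eqref{it:class:common-factor} and \eqref{it:complete-square}, both (a) the reduction to the listed normal forms under the equivalence \eqref{L1L2}, and (b) the matching upper and lower bounds on $\gamma_{P,Q}(p)$.

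\emph{Classification.}  If \eqref{eq:no-common-factor} fails, the common linear factor may be taken to be $r$, so $P = r L_1$, $Q = r L_2$ with $L_1, L_2$ linear forms.  After a suitable change of basis of $\mathrm{span}(P, Q)$ and a linear change of coordinates in $(r,s,t)$ (absorbing any residual $r$-dependence of $L_1, L_2$), we may arrange that $L_1, L_2$ form a basis of $\mathrm{span}(s,t)$, since the no-omission hypothesis and \eqref{eq:det-condition} preclude a degeneracy; hence $(P,Q) \equiv (rs, rt)$.  When instead \eqref{eq:det-condition} fails, a change of variables sending the bad direction $u$ to $e_3$ produces the identity $P_r Q_s - P_s Q_r \equiv 0$, i.e.\ a functional dependence of $P$ and $Q$ in $(r,s)$ for each fixed $t$.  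Simultaneously normalising the real quadratic pencil $\{\alpha P + \beta Q\}$, and imposing \eqref{eq:no-common-factor} together with the no-omission hypothesis, then delivers exactly the two canonical forms $(r^2, s^2 \pm t^2)$ (diagonalisable pencil) and $(r^2, s^2 + rt)$ (single Jordan block).

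\emph{Lower bounds.}  For each piecewise-linear branch of $\gamma_{P,Q}(p)$ I would exhibit a tailored Knapp-type extremiser.  The large-$p$ branch $3 - 10/p$ is already implicit in \eqref{non-deg-decexp}.  For case \eqref{it:class:common-factor} the small-$p$ branch $2 - 4/p$ is produced by concentrating all $f_\Box$ near the flat sublocus $\{r = 0\}$ of $\mc{S}$, where the surface degenerates to a two-plane and the effective number of contributing caps drops from $\delta^{-3}$ to $\delta^{-2}$.  For case \eqref{it:complete-square} the small-$p$ branch $3(\tfrac12 - \tfrac1p)$ is the trivial bound, while the intermediate branch $\tfrac52 - \tfrac7p$ on $[4, 6]$ is captured by wave packets supported on a $\sqrt\delta$-tube along the degenerate line $\{r = 0\}$, where the Fourier supports of the relevant $f_\Box$ collapse into an $O(\delta)$-neighbourhood of a lower-dimensional curve.

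\emph{Upper bounds.}  These are the heart of the matter.  For $(rs, rt)$ I would dyadically decompose in the $r$-variable, $\{r \sim 2^{-j}\}$, and apply the anisotropic affine rescaling $r = 2^{-j} r'$, $rs = 2^{-j} u$, $rt = 2^{-j} v$ to map each slab to the non-degenerate model $(r', s, t, r' s, r' t)$ with $r' \sim 1$ at the rescaled parameter $\delta' = 2^j \delta$; summing the resulting bounds, obtained by applying \eqref{DGS-decexp} to this non-degenerate piece, yields the two-piece exponent \eqref{rsrt_decexp}.  For $(r^2, s^2 \pm t^2)$ the surface is a Cartesian product of the parabola $(r, r^2)$ and the 2-surface $(s, t, s^2 \pm t^2)$, so a product decoupling argument combining the sharp bounds of \cite{MR3736493} for each factor yields \eqref{r2s2rt_decexp}.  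The form $(r^2, s^2 + rt)$ lacks this product structure and is the principal obstacle: I would again decouple into dyadic $r$-slabs and rescale, inside each slab treating $s^2 + rt$ as an affine perturbation of a paraboloid that can in principle be handled by a lower-dimensional decoupling argument; the delicate bookkeeping needed to interpolate the three branches and to match the exponents at the breakpoints $p = 4$ and $p = 6$ is what I expect to be the most intricate step of the entire proof.
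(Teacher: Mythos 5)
Your high-level outline of \emph{what} must be proved (reduction to normal forms, matching upper and lower bounds, delegating the non-degenerate case to \cite{MR3945730} and the $(r^2,s^2\pm t^2)$ upper bound to a product of \cite{MR3374964,MR3736493}) is correct, but several of the proposed \emph{proofs} are either vague where the paper has genuine content, or in one case actually wrong. Specifically:

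\textbf{Classification.}  You hand-wave through the crucial step. In the paper this is Lemma~\ref{190616lemma2}: the failure of \eqref{eq:det-condition} is \emph{equivalent} to some non-trivial real linear combination of $P,Q$ being a perfect square. You only observe that $\det(\nabla P,\nabla Q, e_3)\equiv 0$ means $P_rQ_s-P_sQ_r\equiv 0$, and then appeal to ``simultaneously normalising the real quadratic pencil,'' which is not an argument. The derivation that this forces $\alpha P+\beta Q=(\text{linear})^2$ for some $(\alpha,\beta)\neq 0$ is a non-trivial coefficient case analysis (the paper's proof of Lemma~\ref{190616lemma2}), and without it you cannot justify reducing to $P=r^2$.

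\textbf{Upper bound for $(rs,rt)$.} The dyadic-in-$r$ rescaling strategy fails for a structural reason: $(rs,rt)$ is affinely self-similar. If you Taylor-expand at any base point $(r_0,s_0,t_0)$, writing $r=r_0+a$, $s=s_0+b$, $t=t_0+c$, the quadratic parts are $(ab,ac)$ — exactly the same degenerate pair. So rescaling to a piece where $r\sim 1$ does not produce a non-degenerate surface to which \eqref{DGS-decexp} applies; the rescaled pair still shares the common linear factor, and \cite{MR3945730} is not applicable. The paper's actual argument (Proposition~\ref{190707prop3}) is quite different: after the equivalence $(rs,rt)\equiv(rs,r^2+rt)$, it applies \emph{flat} decoupling in $t$ at $p=6$, reducing on each $t$-slab to the $2$-dimensional surface $(r,s,rs,r^2+rt_0)\subset\R^4$, and then invokes the Bourgain--Demeter result \cite{BD16} for that $2$-surface via the lifting device of \cite[Theorem 2.2]{GZo19}.

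\textbf{Upper bound for $(r^2,s^2+rt)$.} This is the genuinely new theorem, and your sketch (``decouple into dyadic $r$-slabs and rescale, inside each slab treating $s^2+rt$ as an affine perturbation of a paraboloid'') does not capture what actually makes the proof work, and is again vulnerable to the self-similarity issue above. The paper's argument (Section~\ref{section4aa}) alternates between two decoupling steps of different type: a \emph{small-ball} decoupling of the parabola $(s,s^2)$ (Theorem~\ref{thm:small-ball-dec}), applied fiberwise in $s$ while $(r,t)$ ranges over a $\sigma\times\sigma$ square, and a new \emph{thin-slab} decoupling for the codimension-two surface $(r,t,r^2,rt)$ (Theorem~\ref{thm:slab-dec}, with the asymmetric Fourier-support condition $|\delta'|\le\delta^2$, $|\delta''|\le\delta$), applied fiberwise in $(r,t)$ while $s$ ranges over a $\sigma$-interval. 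Crucially, a small-cap decoupling for $(r,t,r^2,rt)$ \emph{fails} (Example~\ref{ex:small-cap-fails}), which is precisely why the thin-slab formulation is needed; this is a point your proposal does not anticipate.

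\textbf{Lower bounds.} For the middle branch $\frac52-\frac7p$ you suggest ``wave packets on a $\sqrt\delta$-tube along $\{r=0\}$.'' The paper's actual construction (Proposition~\ref{prop:skew-lower-bd}) is a tensor-product Knapp example over the full domain: it takes $f_\theta=g_{\theta'}(x',y')h_\theta(x'',y'')$ where the $r$-factor is a genuine curved Knapp piece (supported in a ball of radius $\delta^2$) and the $(s,t)$-factors are flat caps of thickness $\delta$, exploiting that $P=r^2$ depends only on $r$ (so $d'=n'=1$, $d''=2$, $n''=1$); constructive interference of the $h_\theta$ in the $(x'',y'')$-fiber combined with $L^2$-orthogonality of the $g_{\theta'}$ gives the exponent. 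Your $\sqrt\delta$-tube description is not the same object and, as stated, would not have the Fourier supports collapsing to an $O(\delta)$-neighbourhood of a curve (the $rt$ term is only $O(\sqrt\delta)$ there); you would need to carry out the computation to see whether any variant of it reaches the correct exponent.

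In short, the proposal is an accurate table of contents for the theorem, but the three hard steps — Lemma~\ref{190616lemma2}, the upper bound for $(rs,rt)$, and especially the alternating small-ball/thin-slab argument for $(r^2,s^2+rt)$ — are all either missing or replaced by arguments that do not work.
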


Theorem~\ref{thm:classification+exponents} combines several results.
Our main result is the bound $\leq$ in \eqref{r2s2rt_decexp} in the case $(P,Q) \equiv (r^{2},s^{2}+rt)$, which we repeat in Theorem~\ref{thm:main} and discuss in more detail below.

The classification of pairs of quadratic forms is the content of Proposition~\ref{prop:classification}.
The upper bound $\leq$ in \eqref{rsrt_decexp} in the case $(P,Q) \equiv (rs,rt)$ is the content of Proposition~\ref{190707prop3}.
The upper bound $\leq$ in \eqref{r2s2rt_decexp} in the case $(P,Q) \equiv (r^{2}, s^{2}\pm t^{2})$ follows directly from the corresponding inequalities for the parabola $(r,r^{2})$, see \cite{MR3374964}, and the surfaces $(s,t,s^{2}\pm t^{2})$, see \cite[Theorem 1.1]{MR3736493}.
Finally, examples that show the lower bounds $\geq$ in \eqref{DGS-decexp}, \eqref{rsrt_decexp}, and \eqref{r2s2rt_decexp} are discussed in Section~\ref{sec:lower-bd}.

For a pair of linearly independent quadratic forms $(P,Q)$ in three variables that omit at least one variable (possibly after a linear change of variables), the sharp decoupling exponent is also given by \eqref{rsrt_decexp}.
The upper bound follows from flat decoupling \eqref{eq:flat-dec} and the decoupling inequality for two quadratic forms in two variables that was proved in \cite{MR3447712}, similarly to the proof of Proposition~\ref{190707prop3}.
The lower bound follows from Proposition~\ref{prop:skew-lower-bd} with $d'=n'=2$ when $2 \leq p \leq 6$ (and from \eqref{non-deg-decexp} when $6 \leq p \leq \infty$).

\subsection{The main decoupling inequality}
Let us state the main new part of Theorem~\ref{thm:classification+exponents} more explicitly.
\begin{theorem}
\label{thm:main}
Let $\mc{S}$ be the surface given by $(r, s, t, r^2, s^2+rt)$.
Then, for every $\epsilon>0$, we have
\begin{equation}
\label{r2s2rt_dec_const}
\Dec_{\mc{S}}(\delta, p) \lesim_{\epsilon, p}
\begin{cases}
\delta^{-3(\frac 1 2-\frac 1 p)-\epsilon} & \text{ if } 2\le p\le 4,\\
\delta^{-(\frac{5}{2}-\frac{7}{p})-\epsilon} & \text{ if } 4\le p\le 6,\\
\delta^{-(3-\frac{10}p)-\epsilon} & \text{ if } 6\le p\le \infty.
\end{cases}
\end{equation}
\end{theorem}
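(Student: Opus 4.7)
Since the target exponent in \eqref{r2s2rt_dec_const} is piecewise linear in $1/p$ with corners at $p \in \{2, 4, 6, \infty\}$, a Riesz--Thorin interpolation on $\Dec_{\mc{S}}(\delta, p)$ reduces the task to verifying the bound at these four breakpoints. The endpoints $p = 2$ (from $L^2$-orthogonality) and $p = \infty$ (from the triangle inequality using $|\Part{\delta}| = \delta^{-3}$) are immediate. The case $p = 4$ with $\gamma = 3/4$ is flat decoupling, since orthogonality and H\"older give
\[
  \Bigl\| \sum_\Box f_\Box \Bigr\|_{L^4(\R^5)} \le |\Part{\delta}|^{1/4} \Bigl( \sum_\Box \|f_\Box\|_{L^4}^4 \Bigr)^{1/4} = \delta^{-3/4} \Bigl( \sum_\Box \|f_\Box\|_{L^4}^4 \Bigr)^{1/4}.
\]
The substance of Theorem~\ref{thm:main} is therefore the critical estimate $\Dec_{\mc{S}}(\delta, 6) \lesim_\epsilon \delta^{-4/3-\epsilon}$.

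For this I would run an induction on scales tailored to the degeneracy of $\mc{S}$, which is concentrated along the $r$-axis --- the direction $u=(1,0,0)$ where \eqref{eq:det-condition} fails. The key structural observation is that $\mc{S}$ carries an anisotropic scaling symmetry: for every $\lambda > 0$, the map $(r, s, t) \mapsto (\lambda r, \lambda^{1/2} s, t)$ followed by a suitable invertible linear change of the last two coordinates of $\R^5$ (allowed by \eqref{L1L2}) sends $\mc{S}$ back to itself. Exploiting this, I would dyadically decompose in $r$ as $[0,1] = [0,\delta]\sqcup\bigsqcup_{k=0}^{\lceil\log_2\delta^{-1}\rceil-1}[2^{-k-1},2^{-k}]$. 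On each dyadic annulus the restriction of $\mc{S}$ is bounded away from the degenerate set, and rescaling by $\lambda = 2^{-k}$ brings the piece of $\mc{S}$ back to a piece of $\mc{S}$ itself over the region $[1/2, 1] \times [0, 2^{k/2}] \times [0, 1]$, with anisotropic boxes of side $\delta 2^k \times \delta 2^{k/2} \times \delta$. Since $r'$ is bounded away from $0$ here, \eqref{eq:det-condition} holds for the restricted surface, so the Demeter--Guo--Shi non-degenerate decoupling \cite{MR3945730} applies at $p=6$ with exponent $4/3$. After handling the anisotropy via further sub-decoupling in $s'$ and $t$ (1-D parabola decoupling and flat decoupling, respectively), the per-annulus contribution should come out to $\delta^{-4/3-\epsilon}$ times factors summable in $k$, and the summation over $k$ loses only a logarithmic factor that is absorbed into $\epsilon$.

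The principal obstacle is the core region $r \in [0, \delta]$, where the rescaling no longer maps $\mc{S}$ to a non-degenerate piece of itself. After the affine substitution $r = \delta \rho$ and the corresponding invertible change in the last two coordinates of $\R^5$, the piece of $\mc{S}$ over the core takes the model form
\[
  (\rho, s, t,\, \rho^2,\, s^2 + \delta \rho t),\qquad (\rho, s, t) \in [0,1]^3,
\]
a deformation of the cylinder-over-product-of-parabolas $(\rho, s, t, \rho^2, s^2)$ by the small term $\delta \rho t$. The cylinder itself admits $L^6$-decoupling by tensor product from the Bourgain--Demeter 1-D parabola decoupling applied to each of $\rho, s$ and flat decoupling in the trivial $t$-direction. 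Decoupling the core contribution at the target $\delta^{-4/3 - \epsilon}$ thus requires either a further sub-slab decomposition in $(s, t)$ to absorb the perturbation $\delta \rho t$, or a perturbative argument treating the coupling as a small rotation of the tangent planes of the cylinder. A secondary difficulty is the careful accounting of losses: the anisotropic sub-decouplings in each annulus must combine with the DGS constant to yield exactly $\delta^{-4/3 - \epsilon}$ with no slack, and this delicate arithmetic must hold uniformly across all values of $k$; verifying it is the main content of the induction.
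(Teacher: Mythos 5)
Your reduction to the breakpoints $p\in\{2,4,6,\infty\}$ via interpolation is correct, and $p=2$, $p=\infty$ are indeed trivial. But the claim that the $p=4$ case ``is flat decoupling'' is false, and in fact you have the difficulty profile of this theorem exactly backwards. Flat decoupling \eqref{eq:flat-dec} in $\ell^p L^p$ form gives the constant $|U|^{1-2/p}$, which at $p=4$ with $|U|=\delta^{-3}$ is $\delta^{-3/2}$, not $\delta^{-3/4}$. The inequality you write, $\|\sum f_\Box\|_4\le |\Part{\delta}|^{1/4}(\sum\|f_\Box\|_4^4)^{1/4}$, is not a consequence of orthogonality and H\"older; unwinding the H\"older step, it is equivalent to $\ell^2 L^4$ decoupling with an $O(1)$ constant, which is precisely the genuine curvature-dependent estimate one has to establish. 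The exponent $3(1/2-1/p)$ is the $\ell^2$-decoupling exponent for a three-dimensional surface, and obtaining it at $p=4$ is the entire point of the theorem --- the paper calls $p=4$ ``the most interesting case'' and devotes all of Section~\ref{section4aa}, including the small-ball decoupling for the parabola (Theorem~\ref{thm:small-ball-dec}), the new thin-slab decoupling for $(r,t,r^2,rt)$ (Theorem~\ref{thm:slab-dec}), the bilinear Lemma~\ref{190707lem9}, and the alternating iteration in Propositions~\ref{190618prop5} and \ref{190618prop6}, to proving it. Your proposal contains none of this machinery and has no valid argument at $p=4$.

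Conversely, $p=6$, to which you devote all your effort, is the easy endpoint. The paper disposes of it in one sentence by the same argument as Proposition~\ref{190707prop3}: flat decoupling in the $t$-variable (costing $\delta^{-2(1/2-1/6)}$), and on each $t$-slab the surface becomes affinely equivalent to $(r,s,r^2,s^2)$, for which the two-dimensional Bourgain--Demeter decoupling at $p=6$ gives another $\delta^{-2(1/2-1/6)-\epsilon}$, totaling $\delta^{-4/3-\epsilon}$ as required. Your dyadic annular decomposition in $r$ with anisotropic rescaling to reduce to the non-degenerate DGS result is an interesting alternative idea, but as you yourself note, you have not resolved either the core region $r\in[0,\delta]$ (where the $\delta\rho t$ perturbation is nontrivial and a tensor-product estimate does not directly apply) or the ``delicate arithmetic'' of losses across annuli, so even the $p=6$ case is incomplete. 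The fundamental gap remains $p=4$: without a correct argument there, the theorem is not proved, and the mechanism you would need --- the interaction between the small-ball parabola decoupling and the thin-slab decoupling for $(r,t,r^2,rt)$, tuned to the two scales $\sigma$ and $\sigma^2$ --- does not appear in your proposal.
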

It is well-known that, for an integer $s\geq 1$, the study of the decoupling constant $\Dec_{\mc{S}}(\delta, p)$ with $p=2s$ is closely related to the problem of counting integer solutions to the Diophantine system
\begin{equation}\label{diophantine}
\begin{split}
 x_1+\dots+x_s& =x_{s+1}+\dots+x_{2s},\\
 y_1+\dots+y_s&=y_{s+1}+\dots+y_{2s},\\
 z_1+\dots+z_s&=z_{s+1}+\dots+z_{2s},\\
 P(x_1, y_1, z_1)+\dots+P(x_s, y_s, z_s)&=P(x_{s+1}, y_{s+1}, z_{s+1})+\dots+P(x_{2s}, y_{2s}, z_{2s}),\\
 Q(x_1, y_1, z_1)+\dots+Q(x_s, y_s, z_s)&=Q(x_{s+1}, y_{s+1}, z_{s+1})+\dots+Q(x_{2s}, y_{2s}, z_{2s}).
\end{split}
\end{equation}
Indeed, let $J_{\calS, s}(N)$ denote the number of integral solutions of \eqref{diophantine}, where all variables $x_i, y_i, z_i$ with $1\le i\le 2s$ take values in $\Set{0, 1, \dotsc, N}$.
Then, by the argument in \cite[Corollary 4.2]{MR3548534}, we have
\begin{equation}
J_{\calS, s}(N) \lesssim N^{3} \Dec_{\calS}(N^{-1},2s)^{2s}.
\end{equation}
Theorem~\ref{thm:classification+exponents} implies sharp estimates on $J_{\mc{S}, s}(N)$ for every $N$ and every $s\ge 1$.
For instance, if we take $P=r^2$ and $Q=s^2+rt$, then Theorem~\ref{thm:main} implies that 
\begin{equation}
J_{\mc{S}, s}(N)\lesim_{s, \epsilon} N^{3s+\epsilon}+N^{5s-4+\epsilon}+N^{6s-7+\epsilon},
\end{equation}
for every $\epsilon>0$.
In particular, when $s=2$ (which corresponds to $p=4$), we have that $J_{\mc{S}, 2}(N)\lesim_{\epsilon} N^{6+\epsilon}$.
Notice that if we set $x_i=x_{i+2}, y_i=y_{i+2}, z_i=z_{i+2}$ for every $i=1, 2$, then we obtain a trivial lower bound $J_{\mc{S}, 2}(N)\ge N^6$.
In this sense, the number of integral solutions to the system \eqref{diophantine} still shows diagonal behavior when $s = 2$.

In Section~\ref{section_counting}, we will present a simple direct proof of the bound $J_{\mc{S}, 2}(N)\lesim_{\epsilon} N^{6+\epsilon}$ that relies on elementary counting methods, rather than decoupling inequalities.
Such a bound on $J_{\mc{S}, s}$ usually cannot be used to derive a sharp decoupling inequality, that is, a sharp bound on $\Dec_{\mc{S}}(\delta,p)$.
Nevertheless, some features of the counting argument in Section~\ref{section_counting} remain visible in our proof of Theorem~\ref{thm:main}.
We discuss this in more detail in Remark~\ref{rem:counting-vs-dec}.

It is a bit surprising that the decoupling theory for the surface in Theorem \ref{thm:main} admits three different regimes.
This is not reflected by the lower bounds for $J_{\calS,s}$ obtained by Parsell, Prendiville, and Wooley \cite{MR3132907}, since there is no even integer in the interval $(4, 6) \subset \R$.
For this reason, we discuss lower bounds directly for decoupling inequalities in Section~\ref{sec:lower-bd}.

In Theorem~\ref{thm:classification+exponents}, we see that there are several different regimes for sharp decoupling exponents, and in case~\ref{it:complete-square} of that theorem we see that equal decoupling exponents can arise in different ways.
This can be seen as another indication that defining ``curvature'' (for instance in the spirit of the rotational curvature of Phong and Stein \cite{MR857680,MR853446}) for surfaces of co-dimension two may be a very challenging problem.
Currently, defining a curvature in this context seems still to be at the stage of considering concrete examples, see for instance Oberlin \cite{MR2078263}, Gressman \cite{MR3922044} and the references therein.

\subsection{Novelty of the proof}
To our knowledge, Theorem~\ref{thm:main} is the first instance of a sharp decoupling inequality proved despite the failure of the BCCT transversality condition \cite[Theorem 1.15 (c)]{MR2377493}.

The importance of the BCCT condition for decoupling inequalities was emphasized for instance in \cite[Conjecture 1.3]{MR3709122}, and it was verified for large classes of monomial surfaces in \cite{MR3548534,MR3994585,MR4031117}, and for certain classes of polynomial surfaces of low co-dimensions in \cite{MR3848437, MR3945730,MR4047565,MR4143735}.

In our case $(P,Q)=(r^{2},s^{2}+rt)$, the BCCT transversality condition would require
\begin{equation}
\label{eq:transversality-cond}
\dim \pi_{r,s,t} V \geq \tfrac{3}{5} \dim V
\end{equation}
for every linear subspace $V \subset \R^{5}$ and almost surely in the sense of the Lebesgue measure in $(r, s, t)\in [0, 1]^3$, where $\pi_{r,s,t}$ denotes the orthogonal projection onto the tangent space to the surface $\calS$ above the point $(r,s,t)$.
This tangent space equals
\begin{equation}
V(r,s,t) = \lin \Set{ (1,0,0,2r,t), (0,1,0,0,2s), (0,0,1,0,r) },
\end{equation}
and one sees that the condition~\eqref{eq:transversality-cond} is violated for $V = \R \times \Set{0} \times \Set{0} \times \R \times \Set{0}$.

We deal with the failure of the BCCT condition by decoupling alternatingly in the coordinates $r,t$ and in $s$.
Roughly speaking, at certain stages of the proof, we will pick the scales of $r$ and $t$ carefully, so that the surface $\mc{S}$ behaves like a curve $(s, s^2)$;  at other stages, we will pick the scale of $s$ carefully, so that $\mc{S}$ behaves like the surface $(r, t, r^2, rt)$.

How to pick these scales is crucial to the proof.
Let us briefly describe the idea here.
Let $\sigma<1$ be small but much bigger than $\delta$.
Consider a frequency region where $(r, t)$ takes values in a square of side length $\sigma$.
Without loss of generality, assume $(r, t)\in [0, \sigma]^2$.
Then, on a spatial ball of radius $\sigma^{-2}$ in $\R^5$, by the uncertainty principle, the surface $(r, s, t, r^2, s^2+rt)$ has essentially the same behavior as $(r, s, t, 0, s^2)$.
Then one can apply a ``small cap'' (which may also be referred to as ``small ball'' depending on the context) decoupling inequality for the parabola $(s, s^2)$ (Theorem~\ref{thm:small-ball-dec}) to decouple the frequency domain of $s$ into small intervals of length $\sigma^2$.

Next, we fix a frequency interval of $s$, say $[0, \sigma^2]$, and try to decouple in $r$ and $t$.
It is natural to proceed in the following way: consider a spatial ball in $\R^5$ of radius $\sigma^{-4}$.
In this ball, by the uncertainly principle, the surface $(r, s, t, r^2, s^2+rt)$ has the same behavior as $(r, 0, t, r^2, rt)$.
If we had a small cap decoupling inequality, analogous to that for the parabola, that would allow us to decouple in $r$ and $t$ into an even smaller scale, say $\sigma^{-4}$, then we would be able to iterate this argument until we reach frequency scale $\delta$, and obtain the desired decoupling inequality in $L^4$. 

Unfortunately, such a small cap decoupling inequality for the surface $(r, t, r^2, rt)$ is not available, see Example~\ref{ex:small-cap-fails}.
Instead, we will apply a decoupling inequality associated with certain spatial rectangular slabs (Theorem~\ref{thm:slab-dec}).
The use of rectangular slabs is crucial, and to our knowledge, has not previously appeared in the literature. 

\subsection*{Acknowledgment}
The authors thank Phil Gressman and Lillian Pierce for numerous insightful discussions. They also thank Ruixiang Zhang for helpful discussion. 
SG, JR, and PY thank the American Institute of Mathematics for funding two SQuaRE workshops where part of this work was done.
SG was supported in part by the NSF grant 1800274.
PY was supported in part by a General Research Fund CUHK14303817 from the Hong Kong Research Grants Council, and a direct grant for research from the Chinese University of Hong Kong (4053341).
PZ was partially supported by the Hausdorff Center for Mathematics (DFG EXC 2047). 

\section{Classification of pairs of quadratic forms in 3 variables}
In this section we prove the classification part of Theorem~\ref{thm:classification+exponents}.

\begin{proposition}\label{prop:classification}
Let $(P, Q)$ be a degenerate pair of linearly independent quadratic forms.
Moreover, assume that there is no linear change of variables in $(r, s, t)$ after which $P$ and $Q$ both omit one of the variables.
Then exactly one of the following alternatives holds.
\begin{enumerate}
\item\label{it:1} $(P, Q) \equiv (rs, rt)$,
\item\label{it:2} $(P, Q) \equiv (r^2, s^2\pm t^2)$, or
\item\label{it:3} $(P, Q) \equiv (r^2, s^2+rt)$.
\end{enumerate}
\end{proposition}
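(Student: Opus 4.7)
The plan is to organise the argument by which of the two conditions defining non-degeneracy fails, using only linear-algebraic normal form reductions: the equivalence class of $(P,Q)$ is determined by the pencil $\mathrm{span}\{P,Q\}$ together with the freedom of a $3 \times 3$ change of coordinates in $(r,s,t)$.

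\textbf{Case 1.} Suppose \eqref{eq:no-common-factor} fails. After a change of coordinates the common factor is $r$, so $P = r L_1$ and $Q = r L_2$ with $L_1, L_2$ linear. Consider the image of $\{L_1, L_2\}$ in $\R^3 / \mathrm{span}(r) \cong \R^2$. Dimension $0$ means $L_1, L_2 \in \mathrm{span}(r)$, making $P$ and $Q$ proportional to $r^2$ and hence linearly dependent; dimension $1$ lets us use the $L_2$-action to achieve $L_2 \in \mathrm{span}(r)$ and $L_1 \notin \mathrm{span}(r)$, after which $P$ and $Q$ both omit one of $s, t$; both situations are excluded. In dimension $2$, the $L_2$-action achieves $L_1 \equiv s$ and $L_2 \equiv t \pmod{\mathrm{span}(r)}$, and the shear $s \mapsto s - ar,\ t \mapsto t - br$ removes the residual $r^2$-terms, giving $(P,Q) \equiv (rs, rt)$, which is case~\ref{it:1}.

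\textbf{Case 2.} Suppose \eqref{eq:det-condition} fails while \eqref{eq:no-common-factor} holds. The key step, which I expect to be the main obstacle, is to show that the pencil $\{aP + bQ\}$ must contain a rank-one form. Change coordinates so that the degenerate direction is $u = (1,0,0)$; writing $f = \partial_s P$, $g = \partial_t P$, $k = \partial_s Q$, $h = \partial_t Q$, the hypothesis becomes the polynomial identity $fh = gk$ in $\R[r,s,t]$. If $f$ and $g$ are linearly independent linear forms, they are coprime in the UFD $\R[r,s,t]$; $fh = gk$ together with the linearity of $h, k$ force $k = cf$ and $h = cg$ for a common scalar $c$. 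This yields $\nabla_{(s,t)}(Q - cP) = 0$, so $Q - cP = \alpha r^2$ with $\alpha \neq 0$ (using linear independence of $P, Q$), placing $r^2$ in the pencil. The remaining subcases ($f \equiv 0$, or $f, g$ linearly dependent) either force $P = a r^2$ directly, or, after a linear change of variables in $(s,t)$, force $P$ and $Q$ to omit a common variable, contradicting the hypothesis.

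Once a rank-one form is in the pencil, the $L_2$-action brings us to $P = r^2$; then $Q = \alpha r^2 + r(\beta s + \gamma t) + q(s,t)$ for a binary quadratic form $q$, and subtracting $\alpha P$ eliminates the pure $r^2$ term. The classification now hinges on $\rank(q)$. If $\rank(q) = 2$, an $(s,t)$-shear completes the square and absorbs the cross terms $rs, rt$ into a multiple of $r^2$, which in turn is absorbed into $P$; by the signature of $q$ we reach $(r^2, s^2 + t^2)$ or $(r^2, s^2 - t^2)$, giving case~\ref{it:2}. If $\rank(q) = 1$, then $q = s^2$ after an $(s,t)$-change of variables; completing the square in $s$ reduces $Q$ to $s^2 + \gamma' rt$, where $\gamma' = 0$ gives $(r^2, s^2)$ (excluded, as it omits $t$) and $\gamma' \neq 0$ yields $(r^2, s^2 + rt)$ after rescaling $t$, giving case~\ref{it:3}. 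If $\rank(q) = 0$, then $Q = r(\beta s + \gamma t)$ shares the factor $r$ with $P$, contradicting \eqref{eq:no-common-factor}. Mutual exclusivity follows from the pencil stratification: \ref{it:1} is the only case with a common linear factor; the pencils in \ref{it:2} contain two linearly independent singular forms ($r^2$ and $s^2 \pm t^2$), whereas the pencil in \ref{it:3} has, up to scaling, a unique singular element, as a short determinant computation shows.
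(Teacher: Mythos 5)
Your proof is correct, and while its overall skeleton (case split according to which of \eqref{eq:no-common-factor}, \eqref{eq:det-condition} fails, followed by normal-form reductions) mirrors the paper's, the crucial step is handled differently and more conceptually. The paper isolates a separate lemma asserting that \eqref{eq:det-condition} is equivalent to ``no nontrivial linear combination of $P,Q$ is a complete square,'' and proves it by expanding $\det(\nabla P, \nabla Q, u)$, extracting six coefficient equations, and running an explicit case analysis on the coefficients (normalizing $a_{rs}=0$, then splitting on whether $b_{rs}$ vanishes, etc.). You instead interpret the same vanishing determinant as the polynomial identity $P_s Q_t = P_t Q_s$ and invoke unique factorization in $\R[r,s,t]$: if $P_s,P_t$ are coprime linear forms, divisibility forces $(Q_s,Q_t)=c(P_s,P_t)$, so $Q-cP$ depends only on $r$, placing $r^2$ in the pencil; the degenerate subcases ($P_s, P_t$ proportional or both zero) either give $P$ proportional to $r^2$ directly or, after a GL$_2$ move in $(s,t)$, make $P$ and $Q$ omit a common variable, which is excluded by hypothesis. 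This replaces the paper's six-equation bookkeeping with a one-line coprimality argument, which is a genuine simplification. You also supply an explicit mutual-exclusivity argument (counting singular elements in the pencil: the whole pencil in case~\ref{it:1}, exactly two points up to scaling in case~\ref{it:2}, exactly one in case~\ref{it:3}), which the paper leaves implicit; this is a useful addition, since ``exactly one'' does require checking that the three normal forms are pairwise inequivalent. One small care point worth making explicit in your write-up: in the subcase where $P_s,P_t$ are proportional but not both zero, you should note that the relation $P_s Q_t = P_t Q_s$ then forces $Q_s,Q_t$ to be proportional \emph{with the same ratio}, so the omitted direction is common to $P$ and $Q$ (rather than potentially different for the two forms); as written, ``force $P$ and $Q$ to omit a common variable'' asserts this without quite showing it, though the verification is immediate.
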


The key step of proving Proposition~\ref{prop:classification} is the following result.
\begin{lemma}\label{190616lemma2}
For two general quadratic forms in $3$ variables $P$ and $Q$, Condition \eqref{eq:det-condition} is equivalent to
\begin{equation}
\text{no non-trivial linear combination of $P,Q$ is a complete square.}
\end{equation}
\end{lemma}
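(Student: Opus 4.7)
The plan is to establish both implications of the equivalence. The straightforward direction asserts that if some nontrivial combination $aP + bQ = L^{2}$ is a complete square, with $L$ a linear form, then \eqref{eq:det-condition} fails. Setting $v := \nabla L$ (a constant vector) and differentiating gives $a\nabla P + b\nabla Q = 2Lv$. If $a \neq 0$, substituting $a\nabla P = 2Lv - b\nabla Q$ into the first column of the triple determinant yields
\[
a\det(\nabla P,\nabla Q,v) = \det(2Lv - b\nabla Q,\nabla Q,v) = 2L\det(v,\nabla Q,v) - b\det(\nabla Q,\nabla Q,v) = 0,
\]
since both resulting determinants have a repeated column; the case $b \neq 0$ is symmetric, and the degenerate case $L \equiv 0$ gives $aP + bQ \equiv 0$, which forces $P, Q$ to be linearly dependent and $\det(\nabla P, \nabla Q, u) \equiv 0$ for every $u$.

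For the converse, suppose $\det(\nabla P(x),\nabla Q(x),u) \equiv 0$ for some $u \in \R^{3} \setminus \{0\}$. My goal is to exhibit a nontrivial combination that is a complete square. After a linear change of coordinates---which preserves both sides of the equivalence---I may take $u = e_{3}$. Writing $P(x) = x^{T} A x$ and $Q(x) = x^{T} B x$ with $A, B$ symmetric, and setting
\[
\alpha = (Ax)_{1}, \quad \beta = (Ax)_{2}, \quad \gamma = (Bx)_{1}, \quad \delta = (Bx)_{2},
\]
the hypothesis becomes the polynomial identity $\alpha\delta = \beta\gamma$ in $\R[r,s,t]$.

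The key algebraic input is that every nonzero linear form in the UFD $\R[r,s,t]$ is irreducible. Hence, assuming $\alpha \neq 0$, the divisibility $\alpha \mid \beta\gamma$ gives $\alpha \mid \beta$ or $\alpha \mid \gamma$, i.e.\ proportionality as linear forms. This yields, up to degenerate subcases where some of $\alpha,\beta,\gamma,\delta$ vanish identically, one of two alternatives: either \emph{(a)} $(\alpha,\beta) = k(\gamma,\delta)$ for some $k \in \R$, or \emph{(b)} $(\alpha,\gamma) = k(\beta,\delta)$ for some $k \in \R$. In alternative (a), rows $1$ and $2$ of $A - kB$ vanish, and by symmetry so do columns $1$ and $2$, forcing $A - kB = \lambda e_{3} e_{3}^{T}$ for some $\lambda \in \R$. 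Thus $P - kQ = \lambda t^{2}$ is a complete square (with $\lambda \neq 0$, as otherwise $P$ and $Q$ would be linearly dependent). In alternative (b), $v := e_{1} - ke_{2}$ lies in the common kernel of $A$ and $B$, so after a further change of coordinates both $P$ and $Q$ omit a common variable, contradicting the standing hypothesis on $(P, Q)$ under which Lemma~\ref{190616lemma2} will be applied in Proposition~\ref{prop:classification}.

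The main delicacy is the case analysis arising from the UFD factorization---in particular, carefully tracking the degenerate subcases where one or more of the linear forms $\alpha,\beta,\gamma,\delta$ is identically zero, and verifying that alternative (b) only occurs in the excluded joint-omission scenario so that the real rank-one complete square produced in alternative (a) is the only possibility.
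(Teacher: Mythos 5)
Your proof is correct in its essentials and takes a genuinely different route from the paper's. The paper expands $\det(\nabla P,\nabla Q,u)$ for $u=e_3$ directly in the six coefficients of each of $P$ and $Q$, obtains the system \eqref{eq:ab}, normalizes $a_{rs}=0$, and splits on whether $b_{rs}$ vanishes. You instead pass to the matrix formulation $P=x^{T}Ax$, $Q=x^{T}Bx$, recognize that the failure of \eqref{eq:det-condition} for $u=e_3$ is the single polynomial identity $\alpha\delta=\beta\gamma$ among the four linear forms $(Ax)_1,(Ax)_2,(Bx)_1,(Bx)_2$, and invoke unique factorization in $\R[r,s,t]$. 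This is more structural, and it pinpoints exactly where the standing non-omission hypothesis of Proposition~\ref{prop:classification} enters: your alternative~(b) manufactures a common nonzero kernel vector $e_1-ke_2$ of $A$ and $B$. And indeed the stated equivalence fails without that hypothesis: for $P=r^2-t^2$, $Q=rt$ (both omitting $s$), the binary form $ar^2+brt-at^2$ has discriminant $-a^2-b^2/4<0$ for $(a,b)\neq(0,0)$, so no real linear combination is a real perfect square, yet $\det(\nabla P,\nabla Q,e_1)\equiv 0$. The paper's Case~1 has a gap at precisely this point: the four vanishing quantities listed there are only four of the six $2\times 2$ minors of the matrix with rows $(a_{rr},a_{ss},a_{rt},a_{st})$ and $(b_{rr},b_{ss},b_{rt},b_{st})$; the Pl\"ucker relation forces a fifth to vanish, but the sixth can survive, and it survives exactly in the joint-omission scenario. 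So both arguments rely on the same hypothesis, and yours makes that dependence explicit. Two small points to tidy in your write-up: the constant $\lambda$ in alternative~(a) may be negative, in which case $kQ-P=-\lambda t^2$ rather than $P-kQ$ is the real complete square; and the degenerate subcases where some of $\alpha,\beta,\gamma,\delta$ vanish identically should be written out rather than waved at --- each one yields either a complete square (e.g., $\alpha=\beta\equiv 0$ gives $P=A_{33}t^2$ by symmetry of $A$) or a joint omission, so the conclusion holds, but the reader should not have to supply this.
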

\begin{proof}[Proof of Proposition~\ref{prop:classification} assuming Lemma~\ref{190616lemma2}.]
The hypothesis that $(P, Q)$ is a degenerate pair means that at least one of the conditions \eqref{eq:no-common-factor}, \eqref{eq:det-condition} fails.

Assume that \eqref{eq:no-common-factor} fails, that is, that the two quadratic forms $P$ and $Q$ share a common real linear factor.
Without loss of generality, we may assume that the common factor is $r$.
Then, up to a linear change of variables, there are two cases, $(P, Q)=(r^2, rs)$ or $(P, Q)=(rs, rt)$.
Here we used the assumption that $P$ and $Q$ are linearly independent.
The former case is not admissible, as the $t$ variable is omitted.

Suppose now that \eqref{eq:no-common-factor} holds and \eqref{eq:det-condition} fails.
By Lemma~\ref{190616lemma2}, \eqref{eq:det-condition} fails if and only if some non-trivial linear combination of $P,Q$ is a complete square.
Hence, after a change of variables as in \eqref{L1L2}, we may assume $P(r, s, t)=r^2$.

Now consider $Q(0, s, t)$, which is a quadratic form of two variables. First of all, we know that it cannot have rank zero, as otherwise $Q(r, s, t)$ will share a common factor with $P(r, s, t)=r^2$.
Therefore, $Q(0, s, t)$ can have rank either one or two.
Making a change of variables in $s$ and $t$, we may assume that $Q(0, s, t)$ equals either $s^{2} \pm t^{2}$ (if it has rank $2$) or $s^{2}$ (if it has rank $1$).
In the rank $2$ case, we have
\begin{equation}\label{rank2}
Q(r,s,t) =
s^2\pm t^2+ c_1 r^2+ c_2 r s+c_3 rt.
\end{equation}
Here $c_1, c_2, c_3$ are real numbers. We now add multiples of $P(r, s, t)=r^2$ to $Q(r, s, t)$ and complete squares. This process removes all the mixed terms in \eqref{rank2} and hence $(P, Q)\equiv (r^2, s^2\pm t^2)$.

The case of $Q(0, s, t)$ having rank one is similar, but one of the mixed terms cannot be removed, hence $(P,Q) \equiv (r^{2},s^{2}+crt)$.
The coefficient $c$ does not vanish, since otherwise $(P,Q)$ would omit the variable $t$.
Rescaling the variable $t$, we may assume $c=1$.
\end{proof}
\begin{proof}[Proof of Lemma~\ref{190616lemma2}.]
If some non-trivial linear combination of $P,Q$ is of the form $(u_{1}r+u_{2}s+u_{3}t)^{2}$, then Condition \eqref{eq:det-condition} fails for that $u$.

Conversely, suppose that Condition \eqref{eq:det-condition} fails for some $u\neq 0$.
Without loss of generality, we may assume $u=(0,0,1)$.
Write $P=a_{rr}r^{2}+a_{ss}s^{2}+a_{tt}t^{2}+a_{rs}rs+a_{rt}rt+a_{st}st$ and $Q=b_{rr}r^{2}+b_{ss}s^{2}+b_{tt}t^{2}+b_{rs}rs+b_{rt}rt+b_{st}st$.
Then
\begin{equation}
\begin{split}
0 &\equiv
\det(\nabla P, \nabla Q, u)
=
\det \begin{pmatrix}
\partial_{r} P & \partial_{r} Q \\
\partial_{s} P & \partial_{s} Q
\end{pmatrix}
\\ &=
\det \begin{pmatrix}
2ra_{rr}+sa_{rs}+ta_{rt} & 2rb_{rr} + sb_{rs} + tb_{rt} \\
2sa_{ss}+ra_{rs}+ta_{st} & 2sb_{ss} + rb_{rs} + tb_{st}
\end{pmatrix}
\\ &=
2r^{2} (a_{rr}b_{rs}-a_{rs}b_{rr}) + 2s^{2} (a_{rs}b_{ss}-a_{ss}b_{rs}) + t^{2} (a_{rt}b_{st}-a_{st}b_{rt})
\\ &+
rs (4a_{rr}b_{ss}+a_{rs}b_{rs}-4a_{ss}b_{rr}-a_{rs}b_{rs})
\\ &+
rt (2a_{rr}b_{st}+a_{rt}b_{rs}-a_{rs}b_{rt}-2a_{st}b_{rr})
\\ &+
st (a_{rs}b_{st}+2a_{rt}b_{ss}-2a_{ss}b_{rt}-a_{st}b_{rs}).
\end{split}
\end{equation}
Since all coefficients must vanish, we obtain
\begin{multline}
\label{eq:ab}
0 = a_{rr}b_{rs}-a_{rs}b_{rr}
= a_{rs}b_{ss}-a_{ss}b_{rs}
= a_{rt}b_{st}-a_{st}b_{rt}
= a_{rr}b_{ss}-a_{ss}b_{rr}
\\ = 2a_{rr}b_{st}+a_{rt}b_{rs}-a_{rs}b_{rt}-2a_{st}b_{rr}
= a_{rs}b_{st}+2a_{rt}b_{ss}-2a_{ss}b_{rt}-a_{st}b_{rs}.
\end{multline}
Replacing $(P,Q)$ by suitable linear combinations, we may assume without loss of generality $a_{rs}=0$.
We distinguish several cases.

Case 1: $b_{rs} = 0$.
Then the equations simplify to
\begin{equation}
0 = a_{rt}b_{st}-a_{st}b_{rt}
= a_{rr}b_{ss}-a_{ss}b_{rr}
= a_{rr}b_{st}-a_{st}b_{rr}
= a_{rt}b_{ss}-a_{ss}b_{rt}.
\end{equation}
This shows that $(a_{rr},a_{ss},a_{rt},a_{st})$ and $(b_{rr},b_{ss},b_{rt},b_{st})$ lie in the same one-dimensional subspace of $\R^{4}$.
Hence, subtracting a suitable multiple of $Q$ from $P$, we may assume $(a_{rr},a_{ss},a_{rt},a_{st})=0$.
But then $P=a_{tt}t^{2}$, and we are done.

Case 2: $b_{rs} \neq 0$.
Then from the first two equations in \eqref{eq:ab} we obtain $a_{rr}=a_{ss}=0$, and the remaining equations simplify to
\begin{equation}
0 = a_{rt}b_{st}-a_{st}b_{rt}
= a_{rt}b_{rs}-2a_{st}b_{rr}
= 2a_{rt}b_{ss}-a_{st}b_{rs}.
\end{equation}
Case 2.1: if $a_{rt}=a_{st}=0$, then $P=a_{tt}t^{2}$, and we are done.

\noindent Case 2.2: if exactly one of $a_{rt},a_{st}$ vanishes, then suppose without loss of generality $a_{rt}=0$ and $a_{st}\neq 0$.
Then from the last equation $b_{rs}=0$, contradiction.

\noindent Case 2.3: both $a_{rt}\neq 0$ and $a_{st}\neq 0$.
Then, multiplying the last two equations, we obtain
\begin{equation}
2a_{st}b_{rr} \cdot 2a_{rt}b_{ss} = a_{rt}b_{rs} \cdot a_{st}b_{rs}.
\end{equation}
Since all $a$'s don't vanish, this gives $4 b_{rr} b_{ss} = b_{rs}^{2}$.
Hence $b_{rr}r^{2}+b_{ss}s^{2}+b_{rs}rs$ is a complete square.
Making a change of variables only in $(r,s)$, we may assume $b_{rs}=0$.
Notice that we retain the relation $a_{rs}=0$ after this change of variables, since we have $a_{rr}=a_{ss}=0$ in Case 2.
Hence we are back in Case 1.
\end{proof}

\section{Sharp decouplings for the surface $(r, s, t, rs, rt)$}
In this section, we will prove the upper bound in \eqref{rsrt_decexp}.
\begin{proposition}
\label{190707prop3}
Let $\mc{S}$ be the surface given by $(r, s, t, rs, rt)$.
Then, for every $\epsilon>0$, we have
\begin{equation}
\Dec_{\mc{S}}(\delta, p) \lesim_{\epsilon, p}
\begin{cases}
\delta^{-(2-\frac 4 p)-\epsilon} & \text{ if } 2\le p\le 6,\\
\delta^{-(3-\frac{10}p)-\epsilon} & \text{ if } 6\le p\le \infty.
\end{cases}
\end{equation}
\end{proposition}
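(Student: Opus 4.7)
The claimed exponents are piecewise linear in $1/p$ and agree at $p=6$ with common value $\gamma=4/3$. Since $\Dec_{\mc{S}}(\delta,2)\lesssim 1$ by Plancherel and $\Dec_{\mc{S}}(\delta,\infty)\le \card{\Part{\delta}}\lesssim \delta^{-3}$ by triangle inequality over the $\delta^{-3}$ caps, $L^p$-interpolation reduces the proposition to the single endpoint estimate
\begin{equation*}
\Dec_{\mc{S}}(\delta,6)\lesssim_\epsilon \delta^{-4/3-\epsilon}.
\end{equation*}
Indeed, interpolating this with the $L^2$ bound yields $\gamma(p)\le 2-4/p$ on $[2,6]$, while interpolating with the $L^\infty$ bound yields $\gamma(p)\le 3-10/p$ on $[6,\infty]$.

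For the $L^6$ endpoint, the guiding arithmetic is $4/3 = 1/3 + 1$, where $1/3 = 1/2-1/6$ is the flat decoupling exponent at $L^6$ for a $1$-dimensional partition into $\delta^{-1}$ pieces, and $1 = 2 - 6/6$ is Bourgain--Demeter's sharp decoupling exponent for the non-degenerate $2$-dimensional paraboloid at $L^6$. The surface $\mc{S}$ carries a natural projection onto the 2D hyperbolic paraboloid $(r,s,rs)\subset \R^3$ via the $1$st, $2$nd, and $4$th coordinates, matching exactly the paraboloid to which Bourgain--Demeter applies; this suggests a two-step strategy that first decouples jointly in $(r,s)$ using the paraboloid and then decouples the remaining variable $t$ flatly.

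Concretely, partition $[0,1]$ in the $t$-variable into $\delta$-intervals $I$ and set $G_I:=\sum_{\Box:\pi_t(\Box)=I}f_\Box$. For each $I$, slice the $L^6(\R^5)$-norm of $G_I$ via Fubini in the $(x_3,x_5)$-variables; each resulting fiber of $G_I$, viewed as a function of $(x_1,x_2,x_4)\in\R^3$, has Fourier support in a $\delta^2$-neighborhood of the 2D hyperbolic paraboloid $\{(\xi_1,\xi_2,\xi_1\xi_2)\}$, decomposable into $\delta$-caps indexed by the $(r,s)$-squares $R$. Applying Bourgain--Demeter fiberwise, integrating in $(x_3,x_5)$, and noting that within $G_I$ each square $R$ corresponds to a unique $\Box$ yields
\begin{equation*}
\|G_I\|_{L^6(\R^5)}\lesssim_\epsilon \delta^{-1-\epsilon}\Big(\sum_{\Box:\pi_t(\Box)=I}\|f_\Box\|_{L^6(\R^5)}^6\Big)^{1/6}.
\end{equation*}
Since distinct $G_I$'s have Fourier supports separated in the $\xi_3$-direction, the flat decoupling lemma then gives
\begin{equation*}
\Big\|\sum_I G_I\Big\|_{L^6(\R^5)}\lesssim_\epsilon \delta^{-1/3-\epsilon}\Big(\sum_I \|G_I\|_{L^6(\R^5)}^6\Big)^{1/6}.
\end{equation*}
Composing the two inequalities produces the claimed $L^6$ endpoint and, by the interpolation in the first paragraph, the proposition.

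The principal obstacle is the cylindrical lifting of the $3$-dimensional paraboloid decoupling to the $5$-dimensional ambient space in the second step. This requires localizing the $L^6(\R^5)$-norm to balls of scale $\delta^{-2}$ (the physical scale dual to the $\delta^2$-thickness in the normal directions $(\xi_4,\xi_5)$), applying Bourgain--Demeter within each such ball on the appropriate slice, and summing up while handling Schwartz tails; care is also needed to reconcile the anisotropic shape of the $5$-dimensional Fourier caps (size $\delta\times\delta\times\delta\times\delta^2\times\delta^2$) with the scales required by flat decoupling in $\xi_3$. These are by-now-standard reductions in the decoupling literature, but they constitute the technical core of the argument.
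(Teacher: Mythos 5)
The overall architecture of your proof matches the paper's — interpolate to reduce to the $L^6$ endpoint, flat-decouple in the $t$-variable into $\delta$-slabs, then apply a Bourgain--Demeter result on each slab — but there are two genuine errors that happen to cancel numerically, so the argument as written does not close.

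First, the flat decoupling exponent is wrong. Your claim that decoupling $\delta^{-1}$ parallel $\xi_3$-slabs at $L^6$ costs $\delta^{-(1/2-1/6)} = \delta^{-1/3}$ is the $\ell^2\to\ell^6$ Hölder cost, not the $\ell^6 L^6$ flat decoupling constant. The correct flat decoupling bound, as in \eqref{eq:flat-dec}, is $|U|^{1-2/p}$, which with $|U|=\delta^{-1}$ and $p=6$ gives $\delta^{-2/3}$. Since the $t$-fibers $(r_0,s_0,t,r_0s_0,r_0t)$ are straight lines, there is no curvature in $t$ and no hope of improving on $\delta^{-2/3}$ at this step.

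Second, discarding the $\xi_5$ coordinate and decoupling only against the hypersurface $(r,s,rs)\subset\R^3$ is a strategically lossy move. The sharp $\ell^6 L^6$ decoupling constant for the $2$-dimensional (hyperbolic) paraboloid in $\R^3$ is $\approx\delta^{-1}$ because its critical exponent is $p_c=4$, whereas the paper retains the information in the fifth coordinate by first replacing $(rs,rt)$ with the equivalent pair $(rs,r^2+rt)$ and then, after freezing $t$, applying the Bourgain--Demeter result \cite{BD16} for a \emph{non-degenerate pair of quadratic forms in two variables}, i.e.\ the codimension-two surface $(r,s,rs,r^2+rt)\subset\R^4$. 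That surface has critical exponent $p_c=6$, and its $\ell^6 L^6$ decoupling constant is $\approx\delta^{-2/3}$, a gain of $\delta^{1/3}$ over the hypersurface route. Note that the change of variables $(rs,rt)\equiv(rs,r^2+rt)$ is essential here: without it, fixing $t=t_0$ leaves the degenerate quadruple $(r,s,rs,rt_0)$ in which the last coordinate is linear in $r$, which is why you were led to drop it.

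Your numerology $4/3 = 1/3 + 1$ is therefore a coincidence of two opposite-sign errors. The corrected decomposition is $4/3 = 2/3 + 2/3$: flat decoupling in $t$ at $\ell^6L^6$ costs $\delta^{-2/3}$, and Bourgain--Demeter for the codimension-two surface $(r,s,rs,r^2+rt)$ costs $\delta^{-2/3}$. Running your scheme with the corrected flat decoupling but keeping the codimension-one Bourgain--Demeter gives only $\delta^{-5/3}$, which is not sharp. Your cylindrical-lifting mechanism (slicing $L^6(\R^5)$ by Fubini in the transverse spatial variables and applying the lower-dimensional decoupling fiberwise) is sound and is essentially what \cite[Theorem 2.2]{GZo19}, cited in the paper, formalizes; the gap lies entirely in the two numerical/structural points above.
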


\begin{proof}
By interpolation with orthogonality at $p=2$ and a trivial estimate at $p=\infty$ (see \eqref{eq:dec-interpolation}), it suffices to prove the case $p=6$.
First, we notice that
\begin{equation}
(rs, rt)\equiv (rs, r^2+rt).
\end{equation}
Denote
\begin{equation}
\mc{S}'=\{(r, s, t, rs, r^2+rt): (r, s, t)\in [0, 1]^3\}.
\end{equation}
Our goal is to prove that
\begin{equation}
\norm[\big]{ \sum_{\Box\in \Part{\delta}} f_{\Box} }_6
\lesim_{\epsilon}
\delta^{-4(\frac 1 2-\frac 1 6)-\epsilon}
\Big(\sum_{\Box} \norm{f_{\Box}}_6^6 \Big)^{1/6},
\end{equation}
where
\begin{equation}
\supp(\widehat{f_{\Box}})\subset
\Set{ (r, s, t, rs+\delta', r^2+rt+\delta'') \given (r, s, t)\in \Box, \abs{\delta'},\abs{\delta''}\le \delta^2 }.
\end{equation}
For an integer $0\le j\le \delta^{-1}$, let
\begin{equation}
S_j=[0, 1]\times [0, 1]\times [j\delta, (j+1)\delta].
\end{equation}
By flat decoupling \eqref{eq:flat-dec}, we obtain
\begin{equation}
\norm{\sum_{\Box\in \Part{\delta}} f_{\Box}}_6 \lesim \delta^{-2(\frac 1 2-\frac 1 6)} \Big(\sum_{j} \norm{ \sum_{\Box\subset S_j} f_{\Box}}_6^6 \Big)^{1/6}.
\end{equation}
It remains to prove
\begin{equation}\label{eq:L6decS''}
\norm{ \sum_{\Box\subset S_j} f_{\Box}}_6 \lesim_{\epsilon} \delta^{-2(\frac 1 2-\frac 1 6)-\epsilon} \Big(\sum_{\Box} \norm{f_{\Box}}_6^6 \Big)^{1/6},
\end{equation}
uniformly in $j$.
To this end, we use the decoupling inequality for the surface
\begin{equation}
\label{eq:calS''}
\mc{S}'':=\{(r, s, rs, r^2): (r, s)\in [0, 1]^2\},
\end{equation}
which was proved in Bourgain and Demeter \cite{BD16} (see also \cite{GZo19}, where this result is discussed from a more general perspective).
In our notation, \cite[Theorem 1.2]{BD16} implies that, for every $\epsilon>0$, we have
\begin{equation}
\label{eq:L6decBD16}
\Dec_{\mc{S}''}(\delta, 6)
\lesim_{\epsilon}
\delta^{-2(\frac 1 2-\frac 1 6)-\epsilon}.
\end{equation}
Using \cite[Theorem 2.2]{GZo19} with $\calH = \Set{(r,s,t) \given t=0}$, we can now deduce \eqref{eq:L6decS''} with $j=0$.
The estimates \eqref{eq:L6decS''} for other values of $j$ can be obtained from the case $j=0$ using the affine transformation
\[
(r,s,t,\xi,\eta) \mapsto (r,s,t+t_{0},\xi,\eta+rt_{0})
\]
in the frequency space, where $t_{0}=j\delta$.
\end{proof}

\section{A counting argument}\label{section_counting}

Consider the Diophantine system \eqref{diophantine} with $P(x, y, z)=x^2$ and $Q(x, y, z)=y^2+xz$. In this section, we will give a direct proof of the estimate 
\begin{equation}
\label{eq:J-with-s=2}
J_{\mc{S}, 2}(N)\lesim_{\epsilon} N^{6+\epsilon},
\end{equation}
for every $\epsilon>0$, without invoking decoupling theory.
Recall that this corresponds to the case $p=4$ in Theorem~\ref{thm:main}, which is the most interesting case there.
As mentioned in the introduction, the argument used in the following proof will shed some light on how to prove the related decoupling inequality in Theorem~\ref{thm:main}.

In the current situation, the system of equations \eqref{diophantine} becomes
\begin{equation}\label{diophantine_explicit}
\begin{split}
 x_1+x_2 &=x_3+x_4,\\
 y_1+y_2 &=y_3+y_4,\\
 z_1+z_2 &=z_3+z_4,\\
 x_1^2+x_2^2&=x_3^2+x_4^2,\\
 y_1^2 + x_1 z_1 + y_2^2 + x_2 z_2 &= y_3^2 + x_3 z_3 + y_4^2 + x_4 z_4.
\end{split}
\end{equation}
The first and fourth equations in \eqref{diophantine_explicit} imply that $\{x_1,x_2\}$ is a permutation of $\{x_3,x_4\}$. Without loss of generality let us assume that $x_1=x_3$ and $x_2=x_4$.
Also keeping in mind that $z_1-z_3=z_4-z_2$, the last equation in
\eqref{diophantine_explicit} can then be written as
\begin{equation} y_1^2  - y_3^2 + (x_1-x_2)(z_1 - z_3) = y_4^2 - y_2^2.\end{equation}
We now distinguish two cases: $x_1=x_2$ and $x_1\not=x_2$.

Case 1: $x_1=x_2$. Then we have $x_1 =x_2=x_3=x_4$ and this is the only constraint on the $x_i$ variables. Similarly, the only constraint on the $y_i$ variables now becomes
\begin{equation}
\begin{split}
y_1 + y_2 &= y_3 + y_4,\\
y_1^2  - y_3^2 &= y_4^2 - y_2^2.
\end{split}
\end{equation}
Finally, the only constraint on the $z_i$ variables is the linear equation $z_1+z_2=z_3+z_4$. To summarize, this case leads to a contribution of $N\cdot N^2\cdot N^3=N^6$ to  $J_{\mc{S}, 2}(N)$.

Case 2: $x_1 \neq x_2$. In this case we have two free choices among the $x_i$ variables. Suppose that $x_1,x_2,x_3,x_4$ have been fixed.

Case 2.1: $z_1=z_3$. Then also $z_2 = z_4$, so we may choose two of the $z_i$ variables freely. Suppose that $z_1,z_2,z_3,z_4$ have been fixed.
The remaining constraints are now
\begin{equation}
\begin{split}
y_1 + y_2 &= y_3 + y_4,\\
y_1^2  - y_3^2 &= y_4^2 - y_2^2,
\end{split}
\end{equation}
which gives two free choices of $y_i$ variables. Summarizing, this case yields a contribution of $\approx N^6$ to $J_{\mc{S}, 2}(N)$.
 
Case 2.2: $z_1\not=z_3$. This is the critical case. First note that there are $\approx N^3$ valid choices of the $z_i$ variables. Assume that $z_1,z_2,z_3,z_4$ have been fixed. 
It remains to analyze the constraints on the remaining variables $y_1,y_2,y_3,y_4$, which can be written as
\begin{equation}\label{eqn:solcnt-crit}
\begin{split}
y_1 -y_3 &= y_4 -y_2,\\
y_1^2  - y_3^2 + C &= y_4^2 - y_2^2,
\end{split}
\end{equation}
where $C=(x_1-x_2)(z_1 - z_3) \neq 0$. We will now make critical use of the fact that all involved quantities are integers. Observe that necessarily $y_1\not=y_3$. Next, the first equation implies that $y_4^2-y_2^2$ is divisible by $y_1-y_3$. Since also $y_1^2-y_3^2$ is divisible by $y_1-y_3$, the second equation implies that $y_1-y_3$ must divide $C$.
Since $C$ is $\lesssim N^2$, we have $d(C)\lesssim_\epsilon N^\epsilon$ for all $\epsilon>0$, where $d(C)$ denotes the number of divisors of $C$. Let $D$ be one of these divisors and suppose that $y_1-y_3=D$. We then have the constraints
\begin{equation}
\begin{split}
y_1 -y_3 &= D,\\
y_4 -y_2 &= D,\\
y_1+y_3 + \tfrac{C}{D} &= y_4  + y_2.
\end{split}
\end{equation}
For each fixed $D$, there are $\lesssim N$ valid choices of $y_1,y_2,y_3,y_4$. Summarizing, this case gives a contribution of $\lesim_\epsilon N^{6+\epsilon}$ to $J_{\mc{S}, 2}(N)$, for all $\epsilon>0$.

\begin{remark}
Using the average bound $\sum_{C \leq N} d(C) \lesssim N \log N$, see e.g.\ \cite[Theorem 2.3]{MR2378655}, instead of a pointwise bound on $d(C)$, the bound \eqref{eq:J-with-s=2} can be further improved to $J_{\mc{S}, 2}(N) \lesssim N^{6} (\log N)^{2}$.
\end{remark}

\begin{remark}
\label{rem:counting-vs-dec}
Let us now mention the analogies between the above argument and the proof of the case $p=4$ of Theorem~\ref{thm:main} below.
The distinction between the $x$'s and $z$'s from the $y$'s in our solution counting argument above motivates us to decouple alternately in the $(r,t)$ variables and the $s$ variable (i.e.\ the alternate use of Propositions~\ref{190618prop5} and \ref{190618prop6}) in Section~\ref{section4aa}.
The fact that solutions of \eqref{eqn:solcnt-crit} are counted for fixed $x$'s and $z$'s corresponds to the fiberwise estimate \eqref{eq:21}.
Indeed, solutions are counted on the Fourier side of the decoupling inequality, so fixing the second spatial variable in \eqref{eq:21} corresponds to considering all $y_{1},\dotsc,y_{4}$ simultaneously in \eqref{diophantine_explicit}.
The use of the quadratic system \eqref{eqn:solcnt-crit} for $y_{1},\dotsc,y_{4}$ corresponds to the use of the small ball decoupling inequality of the parabola (Theorem~\ref{thm:small-ball-dec}) in Proposition~\ref{190618prop5}.
The initial distinction between the cases $x_{1}=x_{2}$ and $x_{1}\neq x_{2}$ corresponds to the broad/narrow dichotomy in the Bourgain--Guth argument in the proof of Proposition~\ref{190707prop8} below, which will be run only in the $r$ variable.
\end{remark}

\section{Sharp decouplings for the surface $(r, s, t, r^2, s^2+rt)$}\label{section4aa}
In this section, we will prove Theorem~\ref{thm:main}.
By interpolation (see \eqref{eq:dec-interpolation}) with orthogonality at $p=2$ and a trivial estimate at $p=\infty$, it suffices to prove the cases $p=4$ and $p=6$.
For $p=6$ we can use the same argument as in Proposition~\ref{190707prop3}, using flat decoupling in the $t$ variable and the decoupling inequality of \cite{BD16} for the surface $(r,s,r^2, s^2)$, lifted to 5 dimensions using \cite[Theorem 2.2]{GZo19}.

It remains to consider $p=4$.
The strategy of the proof for this case is already sketched in the introduction and motivated in Remark~\ref{rem:counting-vs-dec}.
Therefore we will enter the proof directly. 

\begin{notation}
For a dyadic number $\delta \in (0,1)$ and a dyadic box $\alpha\subset [0, 1]^3$ with side lengths at least $\delta$, we use $\Part[\alpha]{\delta}$ to denote the partition of $\alpha$ into dyadic cubes of side length $\delta$.
For three real numbers $k_1, k_2, k_3$, we use $\mc{P}^{(k_1, k_2, k_3)}(\alpha, \sigma)$ to denote a partition of $\alpha$ into rectangular boxes of dimension $\sigma^{k_1}\times \sigma^{k_2}\times \sigma^{k_3}$. We also write $\mathcal{P}(\delta)=\Part[\unitcube]{\delta}$ and $\mathcal{P}^{(k_1,k_2,k_3)}(\sigma)=\mathcal{P}^{(k_1,k_2,k_3)}([0,1]^3, \sigma)$ for brevity.
\end{notation}

Theorem~\ref{thm:main} will be proved by iterating the following two propositions, which decouple in different coordinates.

\begin{proposition}\label{190618prop5}
Let $\alpha_0 \in \mc{P}^{(1,0,1)}(\sigma)$.
For each $\alpha\in \mc{P}^{(1, 2, 1)}(\alpha_0, \sigma)$, let $g_{\alpha}$ be a function with
\begin{equation}
\label{eq:10}
\supp(\widehat{g_{\alpha}})
\subseteq
\Set{ (r, s, t, r^2+\sigma', s^2+rt+\sigma'') \given (r, s, t)\in \alpha, \abs{\sigma'},\abs{\sigma''}\le \sigma^2 }.
\end{equation}
Then, for every $\epsilon'>0$, we have
\begin{equation}
\label{eq:4}
\norm[\Big]{\sum_{\alpha\in \mc{P}^{(1, 2, 1)}(\alpha_0, \sigma)} g_{\alpha}}_4
\lesim_{\epsilon'}
\sigma^{-2(\frac 1 2-\frac 1 4)-\epsilon'}
\Big(\sum_{\alpha\in \mc{P}^{(1, 2, 1)}(\alpha_0, \sigma)} \norm{g_{\alpha} }_4^4 \Big)^{1/4},
\end{equation}
uniformly in $\alpha_{0}$ and $g_{\alpha}$.
\end{proposition}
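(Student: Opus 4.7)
The plan is to exploit the match between the side length $\sigma$ of $\alpha_0$ in the $(r,t)$ directions and the square root of the Fourier thickening $\sigma^2$ in \eqref{eq:10}. This matching allows us to linearize the nonlinear terms $r^2$ and $rt$ on $\alpha_0$ without leaving the allowed thickening. After linearization the surface becomes a cylinder over the parabola $(s, s^2)$, and the statement reduces to an extreme small cap decoupling inequality for the parabola at $L^4$, whose sharp exponent is exactly $\sigma^{-2(1/2 - 1/4) - \epsilon'} = \sigma^{-1/2 - \epsilon'}$.

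\textbf{Linearization.} Fix a corner $(r_0, t_0)$ of $\alpha_0$. For any $(r, s, t) \in \alpha_0$ one has $|r - r_0|, |t - t_0| \leq \sigma$, and hence
\begin{equation*}
r^2 - (2r_0 r - r_0^2) = (r - r_0)^2
\quad \text{and} \quad
rt - (r_0 t + t_0 r - r_0 t_0) = (r - r_0)(t - t_0)
\end{equation*}
are both bounded by $\sigma^2$. The unimodular linear change of variables
\begin{equation*}
\Psi(x_1, x_2, x_3, x_4, x_5) := (x_1 - 2r_0 x_4 - t_0 x_5,\; x_2,\; x_3 - r_0 x_5,\; x_4,\; x_5)
\end{equation*}
on $\R^5$, composed with a harmless Fourier-side translation, sends the linear-in-$(r,t)$ approximation $\{(r, s, t, 2r_0 r - r_0^2, s^2 + r_0 t + t_0 r - r_0 t_0)\}$ of our surface onto the cylinder $\widetilde{\mathcal{S}} := \{(r, s, t, 0, s^2) : (r, s, t) \in \alpha_0\}$. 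Combined with the error bounds above, the transformed $g_\alpha$ have Fourier support in the $C\sigma^2$-thickening of the corresponding slice of $\widetilde{\mathcal{S}}$, and all $L^p$ norms are preserved.

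\textbf{Cylinder-to-parabola reduction and small cap decoupling.} Since $\widetilde{\mathcal{S}}$ does not depend on $r$, $t$, or the 4th coordinate, fibrewise application of a two-dimensional parabola decoupling inequality in the $(s, x_5)$-plane, followed by Fubini in the remaining three variables, reduces \eqref{eq:4} to the following statement: for every family $\{h_I\}$ of functions on $\R^2$ indexed by dyadic sub-intervals $I \subset [0,1]$ of length $\sigma^2$, with $\widehat{h_I}$ supported in the $C\sigma^2$-thickening of the parabola arc $\{(s, s^2) : s \in I\}$, one has
\begin{equation*}
\Big\| \sum_I h_I \Big\|_{L^4(\R^2)}
\lesim_{\epsilon'}
\sigma^{-1/2 - \epsilon'}
\Big( \sum_I \|h_I\|_{L^4(\R^2)}^4 \Big)^{1/4}.
\end{equation*}
This is precisely the extreme small cap ($\gamma = 1$) decoupling for the parabola at $L^4$, and should follow from Theorem~\ref{thm:small-ball-dec}. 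If one prefers to derive it by hand, first apply the standard Bourgain--Demeter $\ell^4(L^4)$ parabola decoupling at the natural scale (length-$\sigma$ arcs with $\sigma^2$ thickening, giving constant $\sigma^{-\epsilon}$ at $L^4$), and then decouple each length-$\sigma$ arc into its $\sigma^{-1}$ length-$\sigma^2$ sub-arcs by flat decoupling, which at $L^4$ costs $\sigma^{-1/2}$ by interpolating between $L^2$-orthogonality (constant $1$) and the trivial $L^\infty$ bound (constant $\sigma^{-1}$). Multiplying yields the desired $\sigma^{-1/2 - \epsilon'}$.

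\textbf{Main obstacle.} Conceptually, the insight is that the proposition amounts, after an affine reduction to a cylinder, to the $\gamma = 1$ extreme small cap decoupling for the parabola at $L^4$; once this identification is made, most of the real work is packaged inside Theorem~\ref{thm:small-ball-dec}, while the surrounding steps---the affine linearization, the Fubini-style cylindrical extension, and the bookkeeping of absolute constants---are routine.
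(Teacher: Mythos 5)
Your primary route is essentially the paper's: after the affine change of variables your cylinder picture and the paper's reduction to $\alpha_0 = [0,\sigma] \times [0,1] \times [0,\sigma]$ are the same thing, and in both cases the engine is Theorem~\ref{thm:small-ball-dec} applied fiberwise in $(x_2,x_5)$ with $\delta = \sigma^2$, followed by Fubini in $(x_1,x_3,x_4)$. (In fact, once one freezes $x_1,x_3,x_4$ and projects the Fourier support to the $(\xi_2,\xi_5)$-plane, the $r^2$-coordinate disappears automatically and the $rt$-contribution to $\xi_5$ is $O(\sigma^2)$ simply because $|rt|\le\sigma^2$ on $\alpha_0$; so the explicit shear $\Psi$ is not strictly needed, and your statement that $\Psi$ ``sends the surface onto the cylinder'' conflates the spatial-side map with its dual frequency-side map --- but these are cosmetic points, and the substance is correct and identical to the paper's.)

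However, your closing ``by hand'' derivation of the small-cap inequality is incorrect, and the error is worth understanding because it goes to the heart of why Theorem~\ref{thm:small-ball-dec} is nontrivial. The sharp $\ell^4 L^4$ Bourgain--Demeter decoupling for the parabola into length-$\sigma$ arcs at thickness $\sigma^2$ has constant $\sigma^{-(\frac12-\frac14)-\epsilon} = \sigma^{-1/4-\epsilon}$, not $\sigma^{-\epsilon}$ (it is the $\ell^2 L^4$ version that gives $\sigma^{-\epsilon}$, and converting $\ell^2$ to $\ell^4$ over $\sigma^{-1}$ caps costs $\sigma^{-1/4}$). Stacking that with the flat-decoupling loss of $\sigma^{-1/2}$ inside each cap would give $\sigma^{-3/4-\epsilon}$, which is strictly worse than the needed $\sigma^{-1/2-\epsilon}$ and would not suffice to run the iteration in Section~\ref{section4aa}. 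The small-ball/small-cap bound of Theorem~\ref{thm:small-ball-dec} genuinely improves on ``Bourgain--Demeter then flat decoupling'' by a factor of $\delta^{1/4}$ at $L^4$, and it requires its own argument (as in \cite{GLY} or \cite{arxiv:1908.09166}); it is not a formal corollary of the standard decoupling theorem.
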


\begin{proposition}\label{190618prop6}
Let $\alpha_0 \in \mc{P}^{(0,1,0)}(\sigma)$.
For each $\alpha\in \mc{P}^{(2, 1, 2)}(\alpha_0, \sigma)$, let $g_{\alpha}$ be a function with
\begin{equation}
\label{eq:11}
\supp(\widehat{g_{\alpha}})
\subseteq
\Set{ (r, s, t, r^2+\sigma', s^2+rt+\sigma'') \given (r, s, t)\in \alpha, \abs{\sigma'}\le \sigma^4, \abs{\sigma''}\le \sigma^2 }.
\end{equation}
Then, for every $\epsilon'>0$, we have
\begin{equation}\label{small_cap_rectangle}
\norm[\Big]{\sum_{\alpha\in \mc{P}^{(2, 1, 2)}(\alpha_0, \sigma)} g_{\alpha}}_4
\lesim_{\epsilon'}
\sigma^{-4(\frac 1 2-\frac 1 4)-\epsilon'}
\Big(\sum_{\alpha\in \mc{P}^{(2, 1, 2)}(\alpha_0, \sigma)} \norm{g_{\alpha} }_4^4 \Big)^{1/4},
\end{equation}
uniformly in $\alpha_{0}$ and $g_{\alpha}$.
\end{proposition}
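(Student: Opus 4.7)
The plan is to reduce Proposition~\ref{190618prop6} to the slab decoupling theorem for the surface $(r,t,r^{2},rt) \subset \R^{4}$ (Theorem~\ref{thm:slab-dec}), by exploiting that all boxes $\alpha \in \mc{P}^{(2,1,2)}(\alpha_{0},\sigma)$ share the same $s$-slab $\alpha_{0}$: after an affine straightening in the $s$-direction the Fourier support becomes cylindrical in $s$, and Fubini in the dual spatial variable then reduces the five-dimensional inequality to a four-dimensional one.

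For the straightening, let $s_{0}$ denote the $s$-center of $\alpha_{0}$; the unit-determinant linear map $(\xi_{1},\ldots,\xi_{5}) \mapsto (\xi_{1},\xi_{2},\xi_{3},\xi_{4},\xi_{5} - 2s_{0}\xi_{2})$, followed by a constant translation in $\xi_{5}$, sends $(r,s,t,r^{2},s^{2}+rt)$ to $(r,s,t,r^{2},(s-s_{0})^{2}+rt)$. Conjugating $g_{\alpha}$ by the dual spatial transformation preserves the $L^{4}$ decoupling problem exactly, and since $|s-s_{0}|\le \sigma/2$ on $\alpha_{0}$, the quadratic term $(s-s_{0})^{2}\le \sigma^{2}/4$ can be absorbed into the fifth-coordinate slack. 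Thus we may assume $\supp(\widehat{g_{\alpha}})$ is contained in a slab-thickening of the \emph{cylinder} $\{(r,s,t,r^{2},rt)\}$, with thickness $\sigma^{4}$ in the fourth and $\sim\sigma^{2}$ in the fifth coordinate. Writing the spatial variables as $(y_{1},\ldots,y_{5})$, Fubini gives
\begin{equation*}
\norm[\Big]{\sum_{\alpha} g_{\alpha}}_{L^{4}(\R^{5})}^{4} = \int_{\R} \norm[\Big]{\sum_{\alpha} g_{\alpha}(\cdot,y_{2},\cdot,\cdot,\cdot)}_{L^{4}(\R^{4})}^{4}\, dy_{2},
\end{equation*}
and for each fixed $y_{2}$, the function $G_{\alpha}^{(y_{2})}:= g_{\alpha}(\cdot,y_{2},\cdot,\cdot,\cdot)$ has four-dimensional Fourier support contained in the projection of the five-dimensional support onto $(\xi_{1},\xi_{3},\xi_{4},\xi_{5})$, namely a slab-thickening of the two-dimensional surface $(r,t,r^{2},rt)$ above the $\sigma^{2}\times\sigma^{2}$ box $\alpha_{r,t}$ (the projection of $\alpha$ onto the $(r,t)$-plane). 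As $\alpha$ varies, the $\alpha_{r,t}$ form exactly the dyadic partition of $[0,1]^{2}$ into $\sigma^{2}\times\sigma^{2}$ squares, because each $\alpha$ has the same $s$-projection as $\alpha_{0}$.

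Theorem~\ref{thm:slab-dec} now applies: with the above slab thicknesses, decoupling into $\sigma^{2}\times\sigma^{2}$ squares in $L^{4}$ yields, uniformly in $y_{2}$,
\begin{equation*}
\norm[\Big]{\sum_{\alpha} G_{\alpha}^{(y_{2})}}_{L^{4}(\R^{4})} \lesim_{\epsilon'} \sigma^{-1-\epsilon'} \Big(\sum_{\alpha} \norm{G_{\alpha}^{(y_{2})}}_{L^{4}(\R^{4})}^{4}\Big)^{1/4}.
\end{equation*}
Raising to the fourth power, integrating over $y_{2}$, and applying Fubini to the right-hand side produces the desired \eqref{small_cap_rectangle}, noting $4(\tfrac{1}{2}-\tfrac{1}{4})=1$.

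The main obstacle is Theorem~\ref{thm:slab-dec} itself: the slab thickness $\sigma^{2}$ in the $rt$-coordinate dwarfs the intrinsic $\sigma^{4}$ variation of $rt$ on a $\sigma^{2}\times\sigma^{2}$ cube, so any curvature in that direction is entirely washed out, and, as discussed in the introduction, the small-cap analogue actually fails (Example~\ref{ex:small-cap-fails}). Extracting the $\sigma^{-1-\epsilon'}$ bound must therefore rely solely on the parabolic curvature of $r\mapsto r^{2}$ combined with the thick rectangular-slab geometry on the spatial side, which appears to be the genuinely novel ingredient of this paper.
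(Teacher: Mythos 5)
Your proof is correct and follows essentially the same route as the paper: an affine straightening in $s$ (the map $L_{\alpha_0}$ implicit in the paper's ``by affine scaling''), Fubini in the spatial variable dual to $s$, the observation that the resulting fiberwise Fourier support lies in a slab of thickness $\sigma^4\times\sigma^2$ about $(r,t,r^2,rt)$ because the $s^2$ term is $O(\sigma^2)$, and then Theorem~\ref{thm:slab-dec} with $\delta=\sigma^2$. Your writeup is a bit more explicit about the shear that removes the linear-in-$s$ part, but the argument is the one the authors give.
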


The Fourier support restrictions in \eqref{eq:10} and \eqref{eq:11} arise naturally in the proofs (see Remarks~\ref{rmk5.1}, \ref{rmk5.2} and \ref{rmk5.3} below),
but it would be sufficient to prove the above results under the more restrictive conditions $\abs{\sigma'},\abs{\sigma''} \leq \sigma^{4}$. 

The proofs of Propositions~\ref{190618prop5} and~\ref{190618prop6}, as well as Theorem~\ref{thm:main}, rely on translation-dilation invariance, which we now explain.
Let
\[
\alpha_{0} = [0,\sigma^{k_{1}}] \times [0,\sigma^{k_{2}}] \times [0,\sigma^{k_{3}}],
\quad
\alpha = (r_0,s_0,t_0) + \tau \alpha_{0}.
\]
Then, the affine map
\begin{multline}
\label{eq:affine-scaling}
L_{\alpha}(\xi) := (\tau\xi_1 + r_0, \tau\xi_2 + s_0, \tau\xi_3 + t_0,\\
\tau^{2}\xi_4 + \tau 2r_0 \xi_1,
\tau^{2}\xi_5 + \tau(2s_0 \xi_2 + t_0 \xi_1 + r_0 \xi_3) + s_0^2 + r_0t_0 )
\end{multline}
maps frequency parallelepipeds such as in \eqref{eq:fourier-supp}, \eqref{eq:10}, and \eqref{eq:11} to other frequency parallelepipeds of a similar form.
Thus, in order to decouple on $\alpha$, we will first pull the Fourier transforms of the relevant functions back by $L_{\alpha}$, and decouple on $[0,\sigma^{k_{1}}] \times [0,\sigma^{k_{2}}] \times [0,\sigma^{k_{3}}]$ instead, where
\[
(k_{1},k_{2},k_{3})
\in \Set{ (0,0,0), (0,1,0), (1,0,1) }.
\]

\begin{proof}[Proof of Theorem~\ref{thm:main} with $p=4$ assuming Propositions~\ref{190618prop5} and~\ref{190618prop6}]
For a dyadic box $\alpha \subseteq [0,1]^{3}$, we write
\begin{equation}
f_{\alpha} := \sum_{\Box\in\Part[\alpha]{\delta}} f_{\Box}.
\end{equation}
Set $\sigma=\delta^{\epsilon}$.
By flat decoupling \eqref{eq:flat-dec}, we obtain
\begin{equation}
\label{eq:9}
\norm[\Big]{\sum_{\alpha_0\in \mc{P}^{(1, 0, 1)}(\sigma)} f_{\alpha_{0}} }_4
\lesim
\sigma^{-2(1-\frac 2 4)} \Big( \sum_{\alpha_0\in \mc{P}^{(1, 0, 1)}(\sigma)}\norm[\big]{ f_{\alpha_{0}} }^4_4 \Big)^{1/4}.
\end{equation}
We iterate the following two estimates.
Let $k \in \Set{0,1,\dotsc}$ with $\delta \leq \sigma^{2k+3}$.

Given $\alpha \in \mc{P}^{(2k+1, 2k, 2k+1)}(\sigma)$, by a rescaled version of Proposition~\ref{190618prop5}, we obtain
\begin{equation}
\label{eq:7}
\begin{split}
\norm[\big]{ f_{\alpha} }_4
&=
\norm[\Big]{ \sum_{\alpha' \in \mc{P}^{(2k+1, 2k+2, 2k+1)}(\alpha, \sigma)} f_{\alpha'} }_4
\\ & \lesim_{\epsilon}
\sigma^{-2(\frac 1 2-\frac 1 4)-\epsilon}
\Big( \sum_{\alpha'\in \mc{P}^{(2k+1, 2k+2, 2k+1)}(\alpha, \sigma)} \norm[\big]{ f_{\alpha'} }_4^4\Big)^{1/4}.
\end{split}
\end{equation}

Given $\alpha\in \mc{P}^{(2k+1, 2k+2, 2k+1)}(\sigma)$, by a rescaled version of Proposition~\ref{190618prop6}, we obtain
\begin{equation}
\label{eq:8}
\begin{split}
\norm[\big]{ f_{\alpha} }_4
&=
\norm[\Big]{ \sum_{\alpha'\in \mc{P}^{(2k+3, 2k+2, 2k+3)}(\alpha, \sigma)} f_{\alpha'} }_4\\
& \lesim_{\epsilon}
\sigma^{-4(\frac 1 2-\frac 1 4)-\epsilon}
\Big(\sum_{\alpha'\in \mc{P}^{(2k+3, 2k+2, 2k+3)}(\alpha, \sigma)} \norm[\big]{ f_{\alpha'} }_4^4\Big)^{1/4}.
\end{split}
\end{equation}
Let $K$ be the largest integer such that $\delta \leq \sigma^{2K+1}$.
Using \eqref{eq:9} and applying the estimates \eqref{eq:7} and \eqref{eq:8}  for $k=0,\dots,K-1$, we obtain
\begin{equation}
\norm[\Big]{ \sum_{\Box\in \Part{\delta}} f_{\Box} }_4
\lesssim_{K,\epsilon}
\sigma^{-(6K+4)(\frac12-\frac14)-2K\epsilon}
\Big(\sum_{\alpha\in \mc{P}^{(2K+1, 2K, 2K+1)}(\sigma)} \norm[\big]{ f_{\alpha} }_4^4\Big)^{1/4}.
\end{equation}

For every $\alpha\in \mc{P}^{(2K+1, 2K, 2K+1)}(\sigma)$, we have $\abs{\Part[\alpha]{\delta}} \leq \sigma^{-7}$.
Hence, by flat decoupling \eqref{eq:flat-dec}, we obtain
\begin{equation}
\norm[\big]{ f_{\alpha} }_4
\lesssim
\sigma^{-7 \cdot (1-\frac24)}
\Big(\sum_{\Box \in \Part[\alpha]{\delta}} \norm[\big]{ f_{\Box} }_4^4\Big)^{1/4}.
\end{equation}
Combining the last two estimates, we obtain
\begin{equation}
\norm[\Big]{ \sum_{\Box\in \Part{\delta}} f_{\Box} }_4
\lesssim_{K,\epsilon}
\sigma^{-(6K+18)(\frac12-\frac14)-2K\epsilon}
\Big(\sum_{\Box \in \Part{\delta}} \norm[\big]{ f_{\Box} }_4^4\Big)^{1/4}.
\end{equation}
Since $\sigma^{-2K} \leq \delta^{-1}$, this concludes the proof.
\end{proof}

It remains to prove Propositions \ref{190618prop5} and \ref{190618prop6}, which will be our objective in the next two subsections. The key ingredients are a decoupling inequality on  ``small balls'' for the parabola $\Set{(s,s^2) \given s \in [0,1]}$ and a decoupling inequality on ``thin slabs'' for the surface $\Set{(r,t,r^2,rt) \given (r,t) \in [0,1]^2}$. The smallness and thinness of these balls and slabs are what allowed us to decouple certain frequency variables down to scale $\sigma^2$ in Propositions \ref{190618prop5} and \ref{190618prop6} when the other frequency variables are limited to an interval of length $\sigma$. This is crucial in letting us make progress, as we decouple in alternate coordinates in the above proof of Theorem~\ref{thm:main}.

\subsection{Decoupling on small balls and proof of Proposition~\ref{190618prop5}}\label{subsection4.1}
We will need the following ``small ball'' decoupling inequality. 
The term ``small ball'' refers to the fact that it can be localized to spatial scale $\delta^{-1}$, whereas the usual decoupling inequality \eqref{eq:dec-const} can only be localized to the larger spatial scale $\delta^{-2}$.
As a side note, optimal decoupling inequalities for the parabola at spatial scales between $\delta^{-1}$ and $\delta^{-2}$ were recently established in \cite{MR4153908}.
In that paper, ``small ball'' decoupling inequalities are referred to as ``small cap'' inequalities.
They mean the same thing: One features the spatial side of the problem, while the other features the frequency side.
Here, we prefer the name ``small ball'', because in Proposition~\ref{190618prop6} we need a decoupling inequality similar in spirit that also features the spatial side of the problem. 

\begin{theorem}[{cf. \cite[Lemma 4.2]{GLY}}]
\label{thm:small-ball-dec}
Let $\delta \in (0,1)$ be a dyadic number, and for each $\theta \in \Part[\unitint]{\delta}$ let $f_{\theta}$ be a tempered distribution on $\R^{2}$ with
\begin{equation}
\label{eq:small-ball-dec:fourier-supp}
\supp \widehat{f_{\theta}}
\subseteq
\Set{ (s,s^{2}+\delta') \given s \in \theta, \abs{\delta'} \leq \delta }.
\end{equation}
Then, for every $\epsilon>0$, we have
\begin{equation}
\norm[\Big]{ \sum_{\theta\in\Part[\unitint]{\delta}} f_{\theta}}_{L^{4}(\R^{2})}
\lesssim_{\epsilon}
\delta^{-(\frac12-\frac14)-\epsilon}
\Bigl( \sum_{\theta\in\Part[\unitint]{\delta}} \norm[\big]{ f_{\theta}}_{L^{4}(\R^{2})}^{4} \Bigr)^{1/4}.
\end{equation}
\end{theorem}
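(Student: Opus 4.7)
The plan is to prove this $L^{4}$ inequality by a Córdoba-type bilinear argument combined with an $\eta$-dependent multiplicity estimate for the Fourier supports of the products $f_{\theta_{1}} f_{\theta_{2}}$. Setting $F = \sum_{\theta} f_{\theta}$, the identity $\|F\|_{4}^{4} = \|F^{2}\|_{2}^{2}$ and Plancherel give
\[
\|F\|_{4}^{4} = \int_{\R^{2}} \Bigl|\sum_{\theta_{1}, \theta_{2}} (\widehat{f_{\theta_{1}}} * \widehat{f_{\theta_{2}}})(\eta)\Bigr|^{2} d\eta,
\]
and each summand is supported in $R_{\theta_{1}} + R_{\theta_{2}}$, where $R_{\theta}$ is essentially a $\delta \times \delta$ box attached to the parabola above the center $c_{\theta}$ of the interval $\theta$. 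Applying Cauchy--Schwarz to the (at most $M(\eta)$) contributing pairs at each frequency $\eta$ and then swapping sum with integral yields
\[
\|F\|_{4}^{4} \lesssim \sum_{\theta_{1}, \theta_{2}} M_{\theta_{1}, \theta_{2}} \|f_{\theta_{1}} f_{\theta_{2}}\|_{2}^{2} \le \sum_{\theta_{1}, \theta_{2}} M_{\theta_{1}, \theta_{2}} \|f_{\theta_{1}}\|_{4}^{2} \|f_{\theta_{2}}\|_{4}^{2},
\]
where $M_{\theta_{1}, \theta_{2}}$ bounds the (essentially constant) multiplicity of pair overlaps on $R_{\theta_{1}} + R_{\theta_{2}}$.

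The central step is a number-theoretic estimate for $M_{\theta_{1}, \theta_{2}}$. Writing $c_{\theta} = j_{\theta} \delta$ with $j_{\theta} \in \Z$, the condition $\eta \in R_{\theta_{1}} + R_{\theta_{2}}$ forces $j_{\theta_{1}} + j_{\theta_{2}} = \eta_{1}/\delta + O(1)$ and $(j_{\theta_{1}} - j_{\theta_{2}})^{2} = (2\eta_{2} - \eta_{1}^{2})/\delta^{2} + O(1/\delta)$. Counting integer squares in a window of length $O(1/\delta)$ around $(2\eta_{2} - \eta_{1}^{2})/\delta^{2}$ yields
\[
M_{\theta_{1}, \theta_{2}} \lesssim \frac{1}{\max(|c_{\theta_{1}} - c_{\theta_{2}}|,\, \sqrt{\delta})}.
\]
Setting $a_{\theta} := \|f_{\theta}\|_{4}^{2}$ and $w(d) := 1/\max(|d|, \sqrt{\delta})$, the resulting bilinear form equals $\langle w * a, a\rangle$ and is bounded by $\|w\|_{\ell^{1}} \|a\|_{\ell^{2}}^{2}$ via Young's inequality. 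A direct calculation gives $\|w\|_{\ell^{1}} \lesssim \delta^{-1} \log(1/\delta)$ (splitting at $|d| = \sqrt{\delta}$), so
\[
\|F\|_{4}^{4} \lesssim \delta^{-1} \log(1/\delta) \sum_{\theta} \|f_{\theta}\|_{4}^{4},
\]
which gives the claim after absorbing the factor $\log^{1/4}(1/\delta)$ into $\delta^{-\epsilon}$.

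The main obstacle is the multiplicity estimate in the degenerate regime $|2\eta_{2} - \eta_{1}^{2}| \lesssim \delta$, where a count based only on the linear constraint $c_{\theta_{1}} + c_{\theta_{2}} \approx \eta_{1}$ gives $O(\delta^{-1})$; the sharper bound $O(\delta^{-1/2})$ crucially exploits the arithmetic of integer squares in a short window. It is equally essential to keep the full $\eta$-dependence of $M(\eta)$ inside the convolution weight $w$ rather than substituting the worst-case value $\delta^{-1/2}$ uniformly across all pairs, since the latter would yield only $\delta^{-3/8}$ in place of the sharp exponent $1/4$.
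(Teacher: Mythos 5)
Your proof is correct, and it fills in a step that the paper itself does not carry out: the paper cites \cite[Lemma 4.2]{GLY} for a version of Theorem~\ref{thm:small-ball-dec} stated for the extension operator and simply notes that ``the proof continues to work at the level of generality in Theorem~\ref{thm:small-ball-dec},'' without reproducing the argument. So there is no in-paper proof to compare against; instead let me assess your argument on its own terms.

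The C\'ordoba-type bilinear scheme is sound. The key inequality
\begin{equation}
\norm{F}_{4}^{4} \leq \int M(\eta) \sum_{\theta_{1},\theta_{2}} \abs{(\widehat{f_{\theta_{1}}}*\widehat{f_{\theta_{2}}})(\eta)}^{2}\,d\eta
\end{equation}
is an honest Cauchy--Schwarz, and your observation that $M(\eta)\lesssim M_{\theta_{1},\theta_{2}}$ \emph{uniformly} for $\eta\in R_{\theta_{1}}+R_{\theta_{2}}$ is what legitimizes pulling $M_{\theta_{1},\theta_{2}}$ out of the integral. The multiplicity count itself is correct: writing $d=\abs{c_{\theta_{1}}-c_{\theta_{2}}}$, one has $2\eta_{2}-\eta_{1}^{2}=(d)^{2}+O(\delta)$ on $R_{\theta_{1}}+R_{\theta_{2}}$, the separation $m'=j_{\theta_{1}'}-j_{\theta_{2}'}$ must satisfy $(m')^{2}=(d/\delta)^{2}+O(1/\delta)$, and the spacing $\approx 2m_{0}$ of consecutive squares near $m_{0}=d/\delta$ gives $O(1+1/d)$ solutions when $d\gtrsim\sqrt{\delta}$ and $O(\delta^{-1/2})$ solutions when $d\lesssim\sqrt{\delta}$. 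The passage to the Toeplitz form $\langle w*a,a\rangle$ and the estimate $\norm{w}_{\ell^{1}}\lesssim\delta^{-1}\log(1/\delta)$ via the split at $\abs{m}\approx\delta^{-1/2}$ are both correct, and in fact yield the slightly stronger bound $\Dec\lesssim\delta^{-1/4}(\log(1/\delta))^{1/4}$.

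Two remarks that confirm your proof captures exactly the right mechanism. First, you are right that keeping the $d$-dependence of $M_{\theta_{1},\theta_{2}}$ inside the convolution weight is essential: the uniform bound $\delta^{-1/2}$ only gives $\delta^{-3/8}$, and the same loss appears if one tries the naive factorization of standard $\ell^{2}L^{4}$ decoupling at scale $\delta^{1/2}$ followed by flat decoupling down to $\delta$, so a direct bilinear count is genuinely required. Second, the arithmetic flavour of your count --- lattice squares in a short window --- is the decoupling-side analogue of the divisor argument the paper runs in Section~4, and it is consistent with the bilinear viewpoint of \cite{GLY} to which the paper defers. In short, this is a complete and self-contained proof, compatible with but more explicit than what the paper actually writes for Theorem~\ref{thm:small-ball-dec}.
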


In \cite[Lemma 4.2]{GLY}, a version of Theorem~\ref{thm:small-ball-dec} is stated for the extension operator.
Although it is possible to deduce Theorem~\ref{thm:small-ball-dec} from \cite[Lemma 4.2]{GLY} by the argument in \cite[Section 5]{BD-guide}, it is preferable to observe that the proof continues to work at the level of generality in Theorem~\ref{thm:small-ball-dec}.

\begin{proof}[Proof of Proposition~\ref{190618prop5}]
By affine scaling, we may assume that $\alpha_{0} = [0,\sigma] \times [0,1] \times [0,\sigma]$.
The inequality \eqref{eq:4} will follow from the fiberwise inequality
\begin{equation}
\label{eq:6}
\norm[\Big]{\sum_{\alpha\in \mc{P}^{(1, 2, 1)}(\alpha_0, \sigma)} g_{\alpha}(x_{1},\cdot,x_{3},x_{4},\cdot)}_{L^{4}(\R^{2})}
\lesim_{\epsilon'}
\sigma^{-2(\frac 1 2-\frac 1 4)-\epsilon'} \Big(\sum_{\alpha}\norm{g_{\alpha}(x_{1},\cdot,x_{3},x_{4},\cdot) }_{L^{4}(\R^{2})}^4 \Big)^{1/4}
\end{equation}
with a constant independent of $x_{1},x_{3},x_{4}$.
The inequality \eqref{eq:6} holds by Theorem~\ref{thm:small-ball-dec}, since for every choice of $x_{1},x_{3},x_{4}$ the Fourier support of
\begin{equation}
g_{\alpha}(x_{1},\cdot,x_{3},x_{4},\cdot)
\end{equation}
is contained in an $O(\sigma^{2})$-neighborhood of a $\sigma^{2}$-arc of the unit parabola in $\R^{2}$. Indeed, the Fourier support is contained in the projection of the right hand side of \eqref{eq:10} to $\R^{2}$ by omitting the first, third and fourth coordinate. Since $|rt| \leq \sigma^2$ when $(r,s,t) \in \alpha_0$, the projection is contained inside  $\Set{ (s, s^2+\sigma''') \given \abs{\sigma'''}\le 2\sigma^2 }$, as claimed. 
\end{proof}

\begin{remark} \label{rmk5.1}
The above calculation of the projection shows that even if we had the stronger condition $|\sigma''| \leq \sigma^4$ on the right hand side of \eqref{eq:10}, the proof of Proposition~\ref{190618prop5} will not become easier: the projection of the right hand side of \eqref{eq:10} will still only be in an $O(\sigma^{2})$-neighborhood of a $\sigma^{2}$-arc of the unit parabola in $\R^{2}$ (thanks to the contribution from the term $rt$). We would still need to use the small ball decoupling inequality in Theorem \ref{thm:small-ball-dec}, since the goal was to decouple in the $s$ coordinate down to scale smaller than $\sigma$.
\end{remark}

\subsection{Decoupling on thin slabs and proof of Proposition~\ref{190618prop6}} \label{subsect:slab}
In view of the proof of Proposition~\ref{190618prop5}, it would be natural to use a small ball decoupling for the $2$-dimensional surface $(r, t, r^2, rt)$ in $\R^{4}$. If we had such a small ball decoupling, then we could hope to have the estimate \eqref{small_cap_rectangle} under the assumption that 
\begin{equation}
\label{eq:1111}
\supp(\widehat{g_{\alpha}})
\subseteq
\Set{ (r, s, t, r^2+\sigma', s^2+rt+\sigma'') \given (r, s, t)\in \alpha, \abs{\sigma'}\le \sigma^2, \abs{\sigma''}\le \sigma^2 }.
\end{equation} 
One would then be able to finish the proof of Theorem \ref{thm:main} using the same bootstrapping argument. Unfortunately, as the following example shows, although such a small cap decoupling holds for the more ``elliptic'' surface $(r, t, r^2, t^2)$, it fails for its ``hyperbolic'' variant $(r, t, r^2, rt)$.
\begin{example}
\label{ex:small-cap-fails}
For each $\theta\in \Part[{[0,\delta^{1/2}]\times[0,1]}]{\delta}$, let $f_{\theta}$ be such that $\widehat{f_\theta}$ is a non-negative smooth bump function with $\int \widehat{f_{\theta}}=1$ supported in and adapted to a cube of sidelength $\approx \delta$ contained in
\begin{equation}
\Set{ (r,t,r^2+\delta',rt+\delta'')\given (r,t)\in\theta, \abs{\delta'}\le \delta, \abs{\delta''}\le \delta }.
\end{equation}
For the remaining $\theta\in \Part[\unitsquare]{\delta}\setminus \Part[{[0,\delta^{1/2}]\times[0,1]}]{\delta}$, set $f_\theta=0$.
Then $\cup_{\theta} \supp \widehat{f_{\theta}}$ is contained in a box of size $\approx \delta^{1/2} \times 1 \times \delta \times \delta^{1/2}$.
Hence, $|\sum_{\theta\in\Part[\unitsquare]{\delta}} f_{\theta}|\gtrsim \delta^{-3/2}$ on a slab of dimensions $\approx \delta^{-1/2}\times 1\times \delta^{-1}\times \delta^{-1/2}$ centered at the origin, and it follows that
\begin{equation}
\norm[\Big]{ \sum_{\theta\in\Part[\unitsquare]{\delta}} f_{\theta}}_{L^{4}(\R^{4})}\gtrsim \delta^{-2}.
\end{equation}
On the other hand,
\begin{equation}
\delta^{-2(\frac12-\frac14)} \Bigl( \sum_{\theta\in\Part[\unitsquare]{\delta}} \norm[\big]{ f_{\theta}}_{L^{4}(\R^{4})}^{4} \Bigr)^{1/4}
\approx
\delta^{-2(\frac12-\frac14)} \Bigl( \delta^{-3/2} \cdot \delta^{-4} \Bigr)^{1/4}
=
\delta^{-(2-\frac18)},
\end{equation}
much smaller than $\delta^{-2}$.
\end{example}

The above example shows that we do not have a small ball decoupling for the surface $(r, t, r^2, rt)$ in $\R^{4}$. Instead, we will prove a slightly weaker result, which we call ``decoupling on thin slabs'', since it can be localized to thin slabs of size $\delta^{-2}\times\delta^{-2}\times\delta^{-2}\times\delta^{-1}$.

For a dyadic rectangle $R\subset [0, 1]^2$, we let $\Part[R]{\delta}$ be the partition of $R$ into squares of side length $\delta$.
We denote by $\mc{V}(\delta)$ the smallest constant such that, for every collection of functions $g_{\alpha}$ indexed by $\alpha \in \Part[\unitsquare]{\delta}$ with
\begin{equation}\label{eq:2}
\supp(\widehat{g_{\alpha}})
\subset
\Set{ (r,t,r^2+\delta',rt+\delta'') \given (r,t) \in \alpha,\, \abs{\delta'} \leq \delta^2,\, \abs{\delta''} \leq \delta },
\end{equation}
the following inequality holds:
\begin{equation}
\norm[\Big]{ \sum_{\alpha \in \Part[\unitsquare]{\delta}} g_{\alpha} }_{L^4(\R^4)}
\leq
\mc{V}(\delta) \biggl( \sum_{\alpha \in \Part[\unitsquare]{\delta}} \norm{g_{\alpha}}_{L^4(\R^4)}^4 \biggr)^{\frac14}.
\end{equation}
\begin{theorem}
\label{thm:slab-dec}
For every $\epsilon>0$ and every dyadic $\delta \in (0,1)$, we have
\begin{equation}
\mc{V}(\delta) \lesssim_{\epsilon} \delta^{-1/2-\epsilon}.
\end{equation}
\end{theorem}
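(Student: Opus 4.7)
The plan is to prove the thin slab decoupling by a fiberwise slicing argument in the spatial variable $x_4$, reducing the problem to the standard $\ell^4 L^4$ decoupling for the parabolic cylinder $\{(r,t,r^2):(r,t)\in[0,1]^2\}$ in $\R^3$. The motivating observation is that projecting out the $\xi_4$-coordinate from the Fourier support eliminates precisely the hyperbolic structure $\xi_4\approx\xi_1\xi_2$ on which small ball decoupling fails by Example~\ref{ex:small-cap-fails}, while retaining the well-behaved parabolic structure $\xi_3\approx\xi_1^2$.

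By Fubini one has $\norm{\sum_\alpha g_\alpha}_{L^4(\R^4)}^4 = \int \norm{\sum_\alpha g_\alpha(\cdot,\cdot,\cdot,x_4)}_{L^4(\R^3)}^4\,dx_4$, so it suffices to prove the fiberwise bound
\[
\norm[\Big]{\sum_\alpha g_\alpha(\cdot,\cdot,\cdot,x_4)}_{L^4(\R^3)}^4 \lesim_\epsilon \delta^{-2-\epsilon} \sum_\alpha \norm[\big]{g_\alpha(\cdot,\cdot,\cdot,x_4)}_{L^4(\R^3)}^4
\]
uniformly in $x_4\in\R$, and then integrate. For each fixed $x_4$, the 3D Fourier transform of $g_\alpha(\cdot,\cdot,\cdot,x_4)$ equals $\int\widehat{g_\alpha}(\xi_1,\xi_2,\xi_3,\eta_4)e^{2\pi i x_4 \eta_4}\,d\eta_4$, whose support is the projection of $\supp\widehat{g_\alpha}$ onto $(\xi_1,\xi_2,\xi_3)$. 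Since the constraint $|\xi_4-\xi_1\xi_2|\leq\delta$ in \eqref{eq:2} is always satisfiable by choosing $\xi_4=\xi_1\xi_2$, this projection equals $\Set{(\xi_1,\xi_2,\xi_3) \given (\xi_1,\xi_2)\in\alpha,\,|\xi_3-\xi_1^2|\leq\delta^2}$, an $\alpha$-piece of the standard $\delta^2$-thickening of the parabolic cylinder.

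The fiberwise bound then reduces to the $\ell^4 L^4$ decoupling for the parabolic cylinder at $L^4$, whose constant is $\delta^{-1/2-\epsilon}$. This is obtained by combining the $\ell^2 L^4$ parabola decoupling of Bourgain--Demeter~\cite{MR3374964} (which is $\delta^{-\epsilon}$, since $p=4$ is below the critical exponent $p=6$) with cylindrical extension via \cite[Theorem 2.2]{GZo19} and the trivial $\ell^2$-orthogonality in the cylindrical $\xi_2$-direction, yielding an $\ell^2 L^4$ bound of $\delta^{-\epsilon}$ into the $N=\delta^{-2}$ cells; converting to $\ell^4 L^4$ form by Cauchy--Schwarz introduces a factor $N^{1/4}=\delta^{-1/2}$. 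Integrating the fiberwise bound in $x_4$ using Fubini completes the proof. The main conceptual step---and the point that justifies stating this as a separate theorem---is the initial realization that the slicing must be performed in $x_4$ specifically: only this choice marginalizes out the bad $\xi_4$-direction responsible for the failure of small ball decoupling, after which one is back in the setting of classical decoupling theory.
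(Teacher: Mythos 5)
The gap lies in the claimed $\ell^2 L^4$ bound of $\delta^{-\epsilon}$ for decoupling the parabolic cylinder $\Set{(r,t,r^2) \given (r,t)\in\unitsquare}$ into $\delta \times \delta$ caps. Cylindrical extension of the Bourgain--Demeter $\ell^2 L^4$ parabola decoupling gives constant $\delta^{-\epsilon}$ for decoupling into $\delta^{-1}$ slabs of size $\delta\times 1\times\delta^2$ (decoupling only in $\xi_1=r$), but you still need to decouple each slab in the flat $\xi_2=t$ direction into $\delta^{-1}$ cubes, and this is not free in $L^4$. There is no ``trivial $\ell^2$-orthogonality'' there: flat $\ell^2 L^4$ decoupling into $N$ pieces costs $N^{1/2-1/4}$, i.e.\ $\delta^{-1/4}$, and this is sharp. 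Concretely, take $g_\alpha\neq 0$ only for $\alpha$ in the strip $[0,\delta]\times[0,1]$, each $g_\alpha$ a wave packet with $\int\widehat{g_\alpha}=1$ adapted to its cap. Then $\abs{\sum_\alpha g_\alpha}\sim\delta^{-1}$ on a box of volume $\sim\delta^{-3}$, so $\norm{\sum g_\alpha}_{L^4(\R^3)}\gtrsim\delta^{-7/4}$, while $(\sum\norm{g_\alpha}_4^2)^{1/2}\sim\delta^{-3/2}$, a ratio of $\delta^{-1/4}$. (This is the $d'=n'=1$, $d''=1$, $n''=0$ case of Proposition~\ref{prop:skew-lower-bd} for the cylinder.) Correcting this, your fiberwise argument yields $\mc{V}(\delta)\lesssim_\epsilon\delta^{-1/4}\cdot\delta^{-1/2}\cdot\delta^{-\epsilon}=\delta^{-3/4-\epsilon}$, which is weaker than the target $\delta^{-1/2-\epsilon}$, and after iteration in the proof of Theorem~\ref{thm:main} this falls short of \eqref{r2s2rt_dec_const} at $p=4$.

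The deeper issue is that discarding $\xi_4$ by freezing $x_4$ is exactly the wrong thing to do: the coordinate $\xi_4\approx rt$ is what provides the curvature in the $t$ direction, and it is precisely what the paper's bilinear Lemma~\ref{190707lem9} uses. There, the sum-set overlap argument extracts $r_i-r_{i+2}=O(K\delta)$ from the $\xi_3$ component, and then $t_i-t_{i+2}=O(K^2\delta)$ from the $\xi_4$ component, exploiting transversality in $r$. If you project out $\xi_4$, the $t$-constraints disappear entirely (only the linear relation $t_1+t_2-t_3-t_4=O(\delta)$ survives), the sum-sets overlap $\gtrsim\delta^{-1}$ times, and the bilinear gain is lost --- consistent with the $\delta^{-1/4}$ loss above. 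The hyperbolic structure of $rt$ is not an obstacle to be marginalized out; it is the sole source of curvature in $t$, and the obstruction illustrated by Example~\ref{ex:small-cap-fails} is instead circumvented in the paper by restricting to thin slabs (the condition $\abs{\delta'}\le\delta^2$ in \eqref{eq:2}) and running a Bourgain--Guth broad/narrow decomposition in $r$ only.
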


\begin{proof}[Proof of Proposition~\ref{190618prop6} assuming Theorem~\ref{thm:slab-dec}]
By affine scaling, we may assume $\alpha_{0} = [0,1] \times [0,\sigma] \times [0,1]$.
By Fubini's theorem, it suffices to show the fiberwise inequality
\begin{equation}
\label{eq:21}
\norm[\Big]{\sum_{\alpha\in \mc{P}^{(2, 1, 2)}(\alpha_0, \sigma)} g_{\alpha}(\cdot,x_{2},\cdot,\cdot,\cdot)}_{L^{4}(\R^{4})}
\lesim_{\epsilon'}
\sigma^{-1-\epsilon'}\Big(\sum_{\alpha}\norm{g_{\alpha}(\cdot,x_{2},\cdot,\cdot,\cdot)}_{L^{4}(\R^{4})}^4 \Big)^{1/4},
\end{equation}
uniformly in $x_{2}$.
This follows from Theorem~\ref{thm:slab-dec} with $\delta=\sigma^{2}$, because for each fixed $x_{2}$ the Fourier support of
\begin{equation}
g_{\alpha}(\cdot,x_{2},\cdot,\cdot,\cdot)
\end{equation}
is contained in the projection of the Fourier support of $g_{\alpha}$ modulo the second coordinate, and this projection satisfies an inclusion of the form \eqref{eq:2} because $s^2 \leq \sigma^2$ when $s \in [0,\sigma]$.
\end{proof}

\begin{remark} \label{rmk5.2}
The above calculation of projection explains the condition $|\sigma''| \leq \sigma^2$ on the right hand side of \eqref{eq:11}, and hence the condition $|\delta''| \leq \delta$ in \eqref{eq:2}, in the same spirit as in Remark \ref{rmk5.1}. The importance of the condition $|\sigma'| \leq \sigma^2$ on the right hand side of \eqref{eq:11} and the condition $|\delta'| \leq \delta^2$ in \eqref{eq:2} can be seen in the proof of Theorem~\ref{thm:slab-dec}, as will be explained in Remark~\ref{rmk5.3} below.
\end{remark}

\subsection{Proof of decoupling on thin slabs}
Theorem~\ref{thm:slab-dec} will follow from

\begin{proposition}\label{190707prop8}
For each $\epsilon>0$,
there exists $K>0$ such that for any $\delta \in (0,1)$
\begin{equation}
\mc{V}(\delta) \leq K^{1/2+\epsilon}\mc{V}(K\delta) + C_K\delta^{-1/2},
\end{equation}
where $C_K$ is a constant depending only on $K$.
\end{proposition}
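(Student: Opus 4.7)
The plan is to implement a Bourgain--Guth narrow/broad dichotomy localised to the $r$-variable, since $r$ is the only coordinate in which the surface $(r,t,r^{2},rt)$ has intrinsic curvature (through $r^{2}$), and since transversality of its tangent planes at two points is controlled precisely by $r$-separation. This matches the narrow/broad dichotomy in $r$ foreseen in Remark~\ref{rem:counting-vs-dec}. Concretely, I would partition $[0,1]$ in $r$ into $K$ intervals $I_{i}$ of length $K^{-1}$ and set $F_{i}:=\sum_{\alpha:\,\pi_{r}(\alpha)\subseteq I_{i}}g_{\alpha}$, where $\pi_{r}$ is projection onto the $r$-axis. For any preassigned $\epsilon'>0$, the standard Bourgain--Guth lemma yields the pointwise bound
\begin{equation*}
\Bigl|\sum_{\alpha}g_{\alpha}(x)\Bigr|\lesssim K^{\epsilon'}\max_{i}|F_{i}(x)|+K^{O(1)}\max_{|i-j|\ge 2}|F_{i}(x)F_{j}(x)|^{1/2}.
\end{equation*}

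\emph{Narrow case.} For each $i$ I would apply a parabolic rescaling in the $r$-coordinate. The affine map $(\xi_{1},\xi_{2},\xi_{3},\xi_{4})\mapsto(K(\xi_{1}-a_{i}),\,\xi_{2},\,K^{2}(\xi_{3}-2a_{i}\xi_{1}+a_{i}^{2}),\,K(\xi_{4}-a_{i}\xi_{2}))$ identifies the frequency neighbourhood of the surface above $I_{i}\times[0,1]$ with the neighbourhood above $[0,1]^{2}$: a $\delta\times\delta$ cap becomes a $(K\delta)\times\delta$ rectangle, and the thickenings $|\delta'|\le\delta^{2}$, $|\delta''|\le\delta$ rescale exactly to $|\tilde\delta'|\le(K\delta)^{2}$, $|\tilde\delta''|\le K\delta$ --- precisely the conditions in \eqref{eq:2} at scale $K\delta$. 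Grouping these $(K\delta)\times\delta$ rectangles into $(K\delta)\times(K\delta)$ squares, applying $\mc{V}(K\delta)$, and paying $K^{1/2}$ for flat decoupling \eqref{eq:flat-dec} in the $t$-direction at scale $K\delta\to\delta$, I would obtain
\begin{equation*}
\Bigl(\sum_{i}\|F_{i}\|_{4}^{4}\Bigr)^{1/4}\lesssim K^{1/2}\,\mc{V}(K\delta)\,\Bigl(\sum_{\alpha}\|g_{\alpha}\|_{4}^{4}\Bigr)^{1/4},
\end{equation*}
which together with the $K^{\epsilon'}$ Bourgain--Guth factor produces the $K^{1/2+\epsilon}\mc{V}(K\delta)$ term.

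\emph{Broad case.} For $|i-j|\ge 2$, the four tangent vectors $(1,0,2r_{m},t_{m})$ and $(0,1,0,r_{m})$ ($m=1,2$, with $r_{m}\in I_{i_{m}}$) span $\R^{4}$ with determinant $2(r_{2}-r_{1})^{2}\gtrsim K^{-2}$, so the surface is quantitatively transverse between the two strips. I would rewrite $\|(F_{i}F_{j})^{1/2}\|_{L^{4}}^{4}=\|F_{i}F_{j}\|_{L^{2}}^{2}=\|\widehat{F_{i}}\ast\widehat{F_{j}}\|_{L^{2}}^{2}$ and bound the right-hand side by a sumset-multiplicity argument: transversality forces the sum map on pairs of thin slabs $\supp\widehat{g_{\alpha}}$ and $\supp\widehat{g_{\beta}}$ (with $\alpha$ in $I_{i}\times[0,1]$ and $\beta$ in $I_{j}\times[0,1]$) to be $O_{K}(1)$-to-one. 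Plancherel then reduces $\|\widehat{F_{i}}\ast\widehat{F_{j}}\|_{2}^{2}$ to an essentially diagonal double sum, which Cauchy--Schwarz over the $\approx K^{-1}\delta^{-2}$ caps per strip converts to $\|(F_{i}F_{j})^{1/2}\|_{L^{4}}\lesssim_{K}\delta^{-1/2}(\sum_{\alpha}\|g_{\alpha}\|_{4}^{4})^{1/4}$, accounting for the $C_{K}\delta^{-1/2}$ term.

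\emph{Main obstacle.} The principal difficulty is the sumset multiplicity bound in the broad case. The Fourier supports of individual $g_{\alpha}$ are hyperbolic thin slabs of shape $\sim\delta\times\delta\times\delta^{2}\times\delta$, where the anomalous $\delta$-thickening in the $rt$-direction is precisely what defeated a small-ball decoupling approach in Example~\ref{ex:small-cap-fails}. One must verify that the $K^{-2}$-quantitative $r$-transversality still forces essentially diagonal behaviour of $\|\widehat{F_{i}}\ast\widehat{F_{j}}\|_{2}^{2}$ in spite of this extra-wide direction; this slab-specific geometric fact is the crux of Theorem~\ref{thm:slab-dec}.
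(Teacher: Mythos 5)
Your proposal matches the paper's proof in every essential respect: partitioning only in the $r$-variable into $K$ strips, the Bourgain--Guth pointwise dichotomy between narrow and broad, the parabolic rescaling $r\mapsto K(r-a_i)$ that carries the thin-slab thickenings $|\delta'|\le\delta^2$, $|\delta''|\le\delta$ precisely to the corresponding conditions at scale $K\delta$ and costs $K^{1/2}$ via flat decoupling in $t$, and the bilinear broad estimate via Plancherel, bounded overlap of the Minkowski sums of the hyperbolic slabs (using the transversality determinant $2(r_1-r_2)^2\gtrsim K^{-2}$), and Cauchy--Schwarz giving the $C_K\delta^{-1/2}$ term. The step you flag as the ``main obstacle'' --- the bounded-overlap/sumset-multiplicity claim despite the anomalous $\delta$-thickening in the $rt$-direction --- is exactly what the paper's Lemma~\ref{190707lem9} proves by an elementary algebraic manipulation of the third and fourth components of the sum relation, so your plan is complete.
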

\begin{proof}[Proof of Theorem~\ref{thm:slab-dec}]
By iterating the result in Proposition~\ref{190707prop8}, $\frac{(\epsilon-1) \log \delta }{\log K}$-many times (assuming without loss of generality that this is a positive integer), we obtain
\begin{equation}
\mc{V}(\delta) \leq \delta^{(\epsilon-1)\frac12-\epsilon} \mc{V}(\delta^{\epsilon})+\widetilde{C}_K  (\log \delta^{-1} )\delta^{-1/2}.
\end{equation}
It remains to note that
\begin{equation}
\mc{V}(\delta^{\epsilon}) \lesssim \delta^{-2\epsilon}
\end{equation}
by the triangle inequality and H\"older's inequality.
\end{proof}

It remains to prove Proposition~\ref{190707prop8}.
We will apply a bilinear method, together with a Bourgain--Guth type argument \cite{MR2860188}.
We need the following bilinear estimate.
\begin{lemma}\label{190707lem9}
Let $K^{-1}>\delta>0$.
Let $j_1, j_2 \in \Z$ with $\abs{j_1-j_2} \geq 2$.
Let $R_{1}=[j_1 K^{-1},(j_1+1)K^{-1}] \times [0,1] \subset [0,1]^2$ and $R_{2}=[j _2 K^{-1},(j_2+1)K^{-1}] \times [0,1] \subset [0,1]^2$.
Then
\begin{equation}
\norm[\bigg]{ \abs[\bigg]{\biggl( \sum_{\beta \in \Part[R_1]{\delta}} g_\beta \biggr) \biggl( \sum_{\beta' \in \Part[R_2]{\delta}} g_{\beta'} \biggr)}^{\frac12} }_{L^4}
\leq C_{K} \delta^{-1/2}
\biggl(
\sum_{ \beta \in \Part{\delta} } \norm{ g_{\beta} }_{L^4}^4 \biggr)^{\frac14}.
\end{equation}
\end{lemma}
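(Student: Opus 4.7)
The strategy is the standard bilinear-orthogonality argument, the point being that when $R_1$ and $R_2$ are separated by at least $K^{-1}$ in the $r$-coordinate, the map
\begin{equation}
\Phi(r_1,r_2,t_1,t_2) = (r_1+r_2,\, t_1+t_2,\, r_1^2+r_2^2,\, r_1 t_1+r_2 t_2)
\end{equation}
has Jacobian determinant equal (up to sign) to $2(r_1-r_2)^2$, which is bounded below by $2K^{-2}$ on $R_1 \times R_2$.  I would first reduce the $L^4$ estimate to an $L^2$ estimate by the identity $\bigl\| |g_1 g_2|^{1/2}\bigr\|_{L^4}^{4} = \|g_1 g_2\|_{L^2}^2$ where $g_i = \sum_{\beta \in \Part[R_i]{\delta}} g_\beta$, and then apply Plancherel to rewrite this as $\|\widehat{g_1} * \widehat{g_2}\|_{L^2}^2 = \bigl\| \sum_{\beta_1,\beta_2} \widehat{g_{\beta_1}}*\widehat{g_{\beta_2}} \bigr\|_{L^2}^2$, where the sum is over $(\beta_1,\beta_2) \in \Part[R_1]{\delta}\times \Part[R_2]{\delta}$.

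The key step is to show that the Fourier supports of the individual convolutions $\widehat{g_{\beta_1}}*\widehat{g_{\beta_2}}$ have $O_K(1)$ overlap.  Given a point $(\xi_1,\xi_2,\xi_3,\xi_4)$ lying in the Minkowski sum $\supp(\widehat{g_{\beta_1}}) + \supp(\widehat{g_{\beta_2}})$, write it as
\begin{equation}
\xi_1 = r_1+r_2,\ \xi_2 = t_1+t_2,\ \xi_3 = r_1^2+r_2^2+a_1+a_2,\ \xi_4 = r_1 t_1+r_2 t_2+b_1+b_2
\end{equation}
with $(r_i,t_i)\in \beta_i$ and $|a_i|\leq \delta^2$, $|b_i|\leq \delta$.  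From $\xi_1$ and $\xi_3$ one computes $(r_1-r_2)^2 = 2\xi_3 - \xi_1^2 - 2(a_1+a_2)$, and since the hypothesis $|j_1-j_2|\geq 2$ forces $|r_1-r_2|\gtrsim K^{-1}$ with a definite sign, this determines $r_1$ and $r_2$ up to an error of $O_K(\delta^2)$.  Then $\xi_2$ and $\xi_4$ give a linear system $(r_1-r_2)t_1 = \xi_4 - r_2\xi_2 - b_1 - b_2 + O_K(\delta^2)$, which determines $t_1$ (and hence $t_2$) up to $O_K(\delta)$.  Since each $\beta_i$ is a dyadic $\delta$-square, only $O_K(1)$ pairs $(\beta_1,\beta_2)$ are compatible with a given $\xi$.

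Having established the bounded overlap, almost-orthogonality yields
\begin{equation}
\|\widehat{g_1} * \widehat{g_2}\|_{L^2}^2 \lesssim_K \sum_{\beta_1,\beta_2} \|g_{\beta_1} g_{\beta_2}\|_{L^2}^2 \leq \sum_{\beta_1,\beta_2} \|g_{\beta_1}\|_{L^4}^2 \|g_{\beta_2}\|_{L^4}^2
\end{equation}
by Hölder.  The last quantity factors as the product of two sums over $\Part[R_i]{\delta}$; applying Cauchy--Schwarz to each factor with $\#\Part[R_i]{\delta} \approx K^{-1}\delta^{-2}$ and then AM--GM to combine the two square roots, I obtain a bound of $C_K \delta^{-2} \sum_{\beta\in\Part{\delta}}\|g_\beta\|_{L^4}^4$.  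Taking a fourth root yields the desired inequality.

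\textbf{Main obstacle.}  The only non-routine ingredient is the bounded overlap claim; everything else is standard bilinearization combined with Hölder and Cauchy--Schwarz.  What makes the overlap analysis work here is the fact that the Fourier support condition \eqref{eq:2} constrains the third coordinate much more tightly ($|\delta'|\leq \delta^2$) than the fourth ($|\delta''|\leq \delta$): the tighter tolerance in $\xi_3$ is precisely what is needed to determine $(r_1,r_2)$ up to $O_K(\delta^2)$, so that the linear equation determining $t_1$ from $\xi_4$ has only $O(\delta)$-error and hence picks out $O_K(1)$ many $t$-components of $\beta_1,\beta_2$.
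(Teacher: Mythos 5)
Your proof is correct and follows essentially the paper's route: bilinearize, pass to the Fourier side by Plancherel, establish $O_K(1)$ overlap of the Minkowski sums $\supp\widehat{g_{\beta_1}}+\supp\widehat{g_{\beta_2}}$, then finish with H\"older and Cauchy--Schwarz. The paper's verification of the overlap bound is the symmetric version of yours (it takes two admissible tuples and shows each coordinate difference is $O_K(\delta)$, rather than inverting the map $\Phi$ as you do), but the two computations are equivalent.

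One caveat on your concluding remark: the tight tolerance $|\delta'|\le\delta^2$ in \eqref{eq:2} is \emph{not} what makes the overlap count work. The paper notes, immediately after stating the lemma, that it already holds under the relaxed assumption $|\delta'|\le\delta$ --- rerunning your computation with $|a_i|\le\delta$ still pins down $(r_1,r_2)$ to within $O_K(\delta)$, and hence $(t_1,t_2)$ to within $O_K(\delta)$, which suffices for $O_K(1)$ compatible pairs of $\delta$-squares. The real role of the $\delta^2$ tolerance is that it is preserved under the parabolic rescaling used to close the induction on scales in Proposition~\ref{190707prop8} (cf.\ Remark~\ref{rmk5.3}).
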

Lemma~\ref{190707lem9} would in fact still work under the more relaxed Fourier support assumption $\abs{\delta'} \leq \delta$ in \eqref{eq:2}.
\begin{proof}[Proof of Lemma~\ref{190707lem9}.]
By Plancherel's theorem,
\begin{equation}\label{eq:1}
\norm[\bigg]{ \abs[\bigg]{\biggl( \sum_{\beta \in \Part[R_1]{\delta}} g_\beta \biggr) \biggl( \sum_{\beta' \in \Part[R_2]{\delta}} g_{\beta'} \biggr)}^{\frac12} }_{L^4}
=
\norm[\bigg]{ \sum_{\beta \in \Part[R_1]{\delta}} \sum_{\beta' \in \Part[R_2]{\delta}} \widehat{g_\beta} * \widehat{g_{\beta'}} }_{L^2}^{\frac12}.
\end{equation}
We claim that the collection
\begin{equation}
\label{eq:bdd-overlap}
\Set{ \supp(\widehat{g_{\beta}})+\supp(\widehat{g_{\beta'}}) }_{\beta \in \Part[R_1]{\delta}, \beta' \in \Part[R_2]{\delta}}
\end{equation}
has overlap bounded by a constant depending on $K$: if $\beta_1, \beta_3 \in \Part[R_1]{\delta}$ and $\beta_2, \beta_4 \in \Part[R_2]{\delta}$, and $(r_i,t_i) \in \beta_i$ for $i = 1,2,3,4$ are such that 
\begin{equation}
\label{eq:22}
(r_1,t_1,r_1^2,r_1 t_1)+(r_2,t_2,r_2^2,r_2 t_2) = (r_3,t_3,r_3^2,r_3 t_3)+(r_4,t_4,r_4^2,r_4 t_4) + O(\delta),
\end{equation}
then the distances between $\beta_i$ and $\beta_{i+2}$ are $O(K^2\delta)$ for $i = 1,2$.

The geometric reason for this is that the pieces of the surface $(r,t,r^{2},rt)$ with $(r,t)$ restricted to $R_{1}$ and $R_{2}$, respectively, are transverse.
Indeed, if $(r_{1},t_{1}) \in R_{1}$ and $(r_{2},t_{2})\in R_{2}$, then for bases of tangent spaces at these points we have
\begin{equation} \label{eq:det}
\det
\begin{pmatrix}
1 & 0 & 1 & 0\\
0 & 1 & 0 & 1\\
2r_{1} & 0 & 2r_{2} & 0\\
t_{1} & r_{1} & t_{2} & r_{2}
\end{pmatrix}
= 2(r_{1}-r_{2})^{2} \geq 2K^{-2}.
\end{equation}

However, it is formally easier to verify the bounded overlap property of the collection \eqref{eq:bdd-overlap} algebraically.
Suppose that \eqref{eq:22} holds.
Looking at the third component of \eqref{eq:22}, we obtain
\begin{equation}
\begin{split}
O(\delta)
&=
r_{1}^{2}-r_{3}^{2} + r_{2}^{2}-r_{4}^{2}
=
(r_{1}-r_{3})(r_{1}+r_{3}) + (r_{2}-r_{4})(r_{2}+r_{4})
\\ &=
(r_{4}-r_{2})(r_{1}+r_{3}) + (r_{2}-r_{4})(r_{2}+r_{4}) + O(\delta)
\\ &=
(r_{2}-r_{4})(r_{2}+r_{4}-r_{1}-r_{3}) + O(\delta).
\end{split}
\end{equation}
Since $\abs{r_{2}+r_{4}-r_{1}-r_{3}} \gtrsim 1/K$, it follows that $r_{2}-r_{4} = O(K\delta)$.
Similarly, $r_{1}-r_{3} = O(K\delta)$.
Looking at the fourth component of \eqref{eq:22}, we obtain
\begin{equation}
\begin{split}
O(\delta)
&=
r_{1}t_{1}-r_{3}t_{3} + r_{2}t_{2}-r_{4}t_{4}
=
r_{1}(t_{1}-t_{3}) + r_{2}(t_{2}-t_{4}) + O(K\delta)
\\ &=
r_{1}(t_{4}-t_{2}) + r_{2}(t_{2}-t_{4}) + O(K\delta)
\\ &=
(r_{2}-r_{1})(t_{2}-t_{4}) + O(K\delta).
\end{split}
\end{equation}
Since $\abs{r_{2}-r_{1}} \gtrsim 1/K$, it follows that $t_{2}-t_{4} = O(K^2\delta)$.
Similarly, $t_{1}-t_{3} = O(K^2\delta)$.
This shows that the collection \eqref{eq:bdd-overlap} has bounded overlap.
Therefore,
\begin{equation}
\eqref{eq:1} \leq C_K
\biggl(
\sum_{\beta \in \Part[R_1]{\delta}} \sum_{\beta' \in \Part[R_2]{\delta}}
\norm{ \widehat{g_\beta} * \widehat{g_{\beta'}} }_{L^2}^2 \biggr)^{\frac14}.
\end{equation}
By Plancherel's theorem and H\"{o}lder's inequality, this implies
\begin{equation}
\begin{split}
\eqref{eq:1}
&\leq C_K
\biggl( \sum_{\beta \in \Part[R_1]{\delta}} \sum_{\beta' \in \Part[R_2]{\delta}}
\norm{ g_\beta g_{\beta'} }_{2}^2 \biggr)^{\frac14}
\\ &\leq C_K
\biggl( \sum_{\beta \in \Part[R_1]{\delta}} \norm{g_{\beta}}_{4}^2 \biggr)^{\frac14}
\biggl( \sum_{\beta' \in \Part[R_2]{\delta}} \norm{ g_{\beta'} }_{4}^2 \biggr)^{\frac14}
\\ &\leq C_K
\delta^{-2(\frac12-\frac14)}
\biggl( \sum_{\beta \in \Part[R_1]{\delta}} \norm{g_{\beta}}_{L^4}^4 \biggr)^{\frac18}
\biggl( \sum_{ \beta' \in \Part[R_2]{\delta} } \norm{ g_{\beta'} }_{4}^4 \biggr)^{\frac18}.
\end{split}
\end{equation}
This completes the proof of Lemma~\ref{190707lem9}.
\end{proof}

\begin{proof}[Proof of Proposition~\ref{190707prop8}]
Let $K>0$ be a large number that is to be determined.
For each $j \in \mathbb{Z}$ with $0 \leq j \leq K-1$, we define the strip $R_j =[jK^{-1},(j+1)K^{-1}] \times [0,1]$.
We define
\begin{equation}
G_{j} := \sum_{ \alpha \in \Part[R_j]{\delta}} g_{\alpha}.
\end{equation}
For each $x \in \R^{4}$, we define the significant part
\begin{equation}
\calC(x) := \Set[\Big]{ j' \in \Set{0,\dotsc,K-1} \given \abs[\Big]{ \sum_{j=0}^{K-1} G_{j}(x)} < 10K \abs{G_{j'}(x)} }.
\end{equation}
Note that $\calC(x) \neq \emptyset$ unless $\sum_{j} G_j(x) = 0$.
By considering two possible cases $\abs{\calC(x)} \geq 3$ and $\abs{\calC(x)} \leq 2$, we obtain
\begin{equation} 
\label{eq:BG}
\abs[\Big]{\sum_{\alpha \in \Part[\unitsquare]{\delta}} g_{\alpha}(x)} \leq 10\max_{j} \abs{G_{j}(x)} + 10K \max_{j, j' : \abs{j-j'} \geq 2} \abs{G_{j}(x) G_{j'}(x)}^{\frac12};
\end{equation}
indeed, for $j' \notin \calC(x)$, we have $\abs{G_{j'}(x)} \leq \frac{1}{10K} \abs[\Big]{ \sum_{j=0}^{K-1} G_{j}(x)},$
so 
\begin{equation}
\abs[\Big]{\sum_{j' \notin \calC(x)} G_{j'}(x)} \leq \tfrac{1}{10} \abs[\Big]{\sum_{j' \notin \calC(x)} G_{j'}(x)} + \tfrac{1}{10} \sum_{j' \in \calC(x)} \abs{G_{j'}(x)},
\end{equation}
which implies, if $\abs{\calC(x)} \leq 2$, that
$
\abs[\Big]{\sum_{j' \notin \calC(x)} G_{j'}(x)} \leq \tfrac{2}{9}\max_{j} \abs{G_{j}(x)} 
$
and hence
\begin{equation}
\abs[\Big]{\sum_{j'=0}^{K-1} G_{j'}(x)} \leq \left(2+\tfrac{2}{9} \right) \max_{j} \abs{G_{j}(x)}.
\end{equation}
Raising both sides of \eqref{eq:BG} to the fourth power and integrating in $x$, we obtain
\begin{equation}
\norm[\big]{\sum_{\alpha \in \Part[\unitsquare]{\delta}} g_{\alpha} }_4^4 \lesssim \sum_{j=1}^{K} \norm{G_j}_{4}^4+ K^{4}
\sum_{j, j' : \abs{j-j'} \geq 2}\norm{\abs{G_j G_{j'}}^{\frac12}}_4^4.
\end{equation}
By Lemma~\ref{190707lem9}, the second term is bounded by
\begin{equation}
C_K^4 \delta^{-2}
\biggl( \sum_{\alpha \in \Part[\unitsquare]{\delta}} \norm{ g_{\alpha} }_4^4 \biggr).
\end{equation}
In order to conclude the proof, it suffices to show that, for each $j=0,\dotsc,K-1$, we have
\begin{equation}
\label{eq:12}
\norm{G_j}_{4}
\leq
C K^{1/2} \mc{V}(K\delta)
\biggl( \sum_{\alpha \in \Part[R_{j}]{\delta}} \norm{g_{\alpha}}_{4}^4 \biggr)^{\frac14}
\end{equation}
and take $K$ large enough so that $C \leq K^{\epsilon}$.

By an affine transformation, we may assume without loss of generality that $j=0$.
We define the scalings:
\begin{gather*}
L: (\xi_1,\xi_2,\xi_3,\xi_4) \mapsto (\tfrac{\xi_1}{K},\xi_2,\tfrac{\xi_3}{K^2},\tfrac{\xi_4}{K} ),
\\
L': (\xi_{1},\xi_2 ) \mapsto (\tfrac{\xi_1}{K},\xi_2).
\end{gather*}
Note that $L'$ scales $[0,1]^2$ to the strip $R_0$. The map $L$ is chosen so that $L'$ is the restriction of $L$ to the first two coordinates, and so that $L$ leaves the surface $(r,t,r^{2},rt)$ invariant.

For each $\beta \in \Part[\unitsquare]{K\delta}$, we define a function $H_{\beta}$ by
\begin{equation}
\widehat{H_\beta}(\xi_1,\xi_2,\xi_3,\xi_4) :=
\sum_{\alpha \in \Part[L'(\beta)]{\delta}} \widehat{g_{\alpha}}(L(\xi_1,\xi_2,\xi_3,\xi_4)).
\end{equation}
Then,
\begin{equation} \label{eq:Fsupp_Hbeta}
\supp (\widehat{H_\beta})
\subseteq
\Set{ (r,t,r^2+\delta',rt+\delta'') \given (r,t) \in \beta,\, \abs{\delta'} \leq (K\delta)^2,\, \abs{\delta''} \leq K\delta }.
\end{equation}
Thus, by the definition of the constant $\mc{V}(K\delta)$,
\begin{equation}
\norm[\Big]{ \sum_{\beta \in \Part[\unitsquare]{K\delta}} H_{\beta} }_4
\leq \mc{V}(K\delta)
\biggl( \sum_{\beta \in \Part[\unitsquare]{K\delta}}\norm{H_{\beta}}_4^4 \biggr)^{\frac14}.
\end{equation}
Changing back to the original variables and applying flat decoupling \eqref{eq:flat-dec}, we obtain
\begin{equation}
\begin{split}
\norm{G_j }_4
&\lesssim
\mc{V}(K\delta) \biggl(
\sum_{\beta \in \Part[\unitsquare]{K\delta}}
\norm[\Big]{
\sum_{\alpha \in \Part[L'(\beta)]{\delta}}
g_{\alpha} }_4^4 \biggr)^{\frac14}
\\ &\lesssim K^{1/2}\mc{V}(K\delta)
\biggl(\sum_{\alpha \in \Part[R_j]{\delta}} \norm{g_{\alpha} }_4^4 \biggr)^{\frac14},
\end{split}
\end{equation}
where we used that $\abs{\Part[L'(\beta)]{\delta}} = K$ in the last step.
This finishes the proof of \eqref{eq:12}.
\end{proof}

\begin{remark} \label{rmk5.3}
We have already seen, at the beginning of Section~\ref{subsect:slab}, that one could not replace the condition $|\delta'| \leq \delta^2$ on the right hand side of \eqref{eq:2} by $|\delta'| \leq \delta$ and still hope to prove Theorem~\ref{thm:slab-dec}.
It may be helpful to see what goes wrong in the proof of Theorem~\ref{thm:slab-dec} if we had the condition $|\delta'| \leq \delta$ instead.
In that case, when we rescale, on the right hand side of \eqref{eq:Fsupp_Hbeta}, we would only get $|\delta'| \leq K^2 \delta$, which would not allow us to close the induction on scale argument.
This signifies the advantage to decouple on thin slabs as in Theorem~\ref{thm:slab-dec}.
\end{remark}

\section{Lower bounds}
\label{sec:lower-bd}
In this section, we prove lower bounds for decoupling constants defined in \eqref{eq:dec-const}.
In the case when $p$ is an even integer, such bounds were proved for the related problem of bounding multidimensional Vinogradov mean values in \cite[Theorem 3.1]{PPW}.
However, the construction given there does not detect the optimality of the bound \eqref{r2s2rt_dec_const} for $p \in (4,6)$.

\begin{proposition}
\label{prop:skew-lower-bd}
Let $Q_{1}(\bft),\dotsc,Q_{n}(\bft)$ be quadratic forms in $d$ variables.
Suppose that $Q_{1},\dotsc,Q_{n'}$ do not depend on $t_{d'+1},\dotsc,t_{d}$ for some partitions $n=n'+n''$ and $d=d'+d''$.
Let $\calK'' := d''+2n''$.

Let $\calS$ be the surface defined in \eqref{eq:S0} and let $\Dec_{\calS}(\delta,p)$ be the associated decoupling constant, defined in \eqref{eq:dec-const}.
Then, for $2 \leq p < \infty$, we have
\begin{equation}
\label{eq:skew-lower-bd}
\Dec_{\calS}(\delta,p) \gtrsim \delta^{-d'(1/2-1/p)} \cdot \delta^{-d''(1-1/p)+\calK''/p}.
\end{equation}
\end{proposition}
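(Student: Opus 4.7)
The plan is to test against $f_\Box(x, y) := \epsilon_{\Box'}\, (\mathbf{1}_{B_\Box})^\vee(x, y)$, where $(x, y) \in \R^d \times \R^n$, $B_\Box$ is the frequency set on the right-hand side of \eqref{eq:fourier-supp}, and $(\epsilon_{\Box'})_{\Box' \in \Part[{[0,1]^{d'}}]{\delta}}$ are independent random signs. The Fourier-support hypothesis then holds by construction, and the standard sinc computation gives $\norm{f_\Box}_p \approx \abs{B_\Box}^{1 - 1/p} = \delta^{(d+2n)(1-1/p)}$, so that $\bigl(\sum_\Box \norm{f_\Box}_p^p\bigr)^{1/p} \approx \delta^{(d+2n) - (2d+2n)/p}$.

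To lower bound $\norm[\big]{\sum_\Box f_\Box}_p$, I would compute the inverse Fourier transform of $\mathbf{1}_{B_\Box}$ directly: writing $\bft_\Box = (\bft'_\Box, \bft''_\Box)$ for the centre of $\Box$, one finds $\abs{(\mathbf{1}_{B_\Box})^\vee(x, y)} \gtrsim \delta^{d+2n}$ on the tube $T_\Box := \{\abs{y_j} \lesssim \delta^{-2},\ \abs{x_i + (\nabla Q(\bft_\Box)^\top y)_i} \lesssim \delta^{-1}\}$ (the quadratic correction in the phase is $O(1)$ throughout). The hypothesis that $Q_1, \dots, Q_{n'}$ do not depend on $\bft''$ means the $\bft''_\Box$-dependence of $\nabla Q$ lies entirely in the block $j > n'$; hence, for each fixed $\Box'$, the family $\{T_{\Box' \times \Box''}\}_{\Box''}$ contains a common subregion
\begin{equation*}
R := \bigl\{(x, y) : \abs{x_{\bft''}}, \abs{y_{Q''}} \leq c, \ \abs{y_{Q'}} \lesssim \delta^{-2}, \ (x, y) \in T_{\Box' \times \Box''_0}\bigr\}
\end{equation*}
of volume $\abs{R} \approx \delta^{-d' - 2n'}$, where $c > 0$ is a small constant and $\Box''_0$ is any reference centre. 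On $R$ the phase $(\bft_\Box, Q(\bft_\Box)) \cdot (x, y)$ varies by only $O(1)$ as $\bft''_\Box$ ranges over $[0, 1]^{d''}$, so $\bigl|\sum_{\Box''} f_{\Box' \times \Box''}\bigr| \gtrsim N^{d''} \delta^{d+2n}$ uniformly in $\Box'$, with $N := \delta^{-1}$.

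Applying the lower-bound form of Khintchine's inequality to the sum over $\Box'$ and integrating over $R$ yields
\begin{equation*}
\mathbb{E}\,\norm[\Big]{\sum_\Box f_\Box}_p^p \;\gtrsim\; \abs{R} \cdot \bigl(N^{d'}(N^{d''}\delta^{d+2n})^2\bigr)^{p/2} \;\approx\; \delta^{-d' - 2n' - (d' + 2d'') p/2 + (d+2n)p}.
\end{equation*}
Selecting a favourable realization of the signs, dividing by the right-hand side computed above, and invoking the identity $2d + 2n - d' - 2n' = d' + 2d'' + 2n''$ simplifies the resulting exponent to $-d'(1/2 - 1/p) - d''(1 - 1/p) + \calK''/p$, as claimed.

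The main difficulty lies in justifying $R$ simultaneously: one must verify (a) the common-tube inclusion $T_{\Box' \times \Box''} \supseteq R$ for every $\Box''$---which is exactly where the $\bft''$-independence of $Q_1, \dots, Q_{n'}$ is crucially used to freeze the tilt of the tubes in the $x_{\bft'}$ and $y_{Q'}$ directions---and (b) constructive coherence of the phases of $(\mathbf{1}_{B_\Box})^\vee$ across $\Box''$ throughout $R$. Together (a) and (b) pin down the full volume $\approx \delta^{-d' - 2n'}$ of $R$, which in turn produces the sharp exponent stated in the proposition.
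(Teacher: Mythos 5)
There is a genuine gap in the Khintchine step. Your region $R$ is defined via the tube $T_{\Box'\times\Box''_0}$ and therefore depends on $\Box'$: the slab $\{\abs{x_{\bft'} + \nabla_{\bft'} Q'(\bft'_{\Box'})^\top y_{Q'}} \lesssim \delta^{-1}\}$ is \emph{tilted} by $\nabla Q'$ evaluated at the centre of $\Box'$, which varies with $\Box'$ (it is a nontrivial linear function since $Q'$ is quadratic). For $\abs{y_{Q'}} \sim \delta^{-2}$, two such slabs corresponding to distinct $\Box'$ are essentially disjoint, so at a fixed point $(x,y)$ only $O(1)$ of the inner sums $\sum_{\Box''} f_{\Box'\times\Box''}(x,y)$ have modulus $\gtrsim N^{d''}\delta^{d+2n}$. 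Consequently the pointwise bound $\sum_{\Box'} \bigl|\sum_{\Box''} f_{\Box'\times\Box''}(x,y)\bigr|^{2} \gtrsim N^{d'}\bigl(N^{d''}\delta^{d+2n}\bigr)^{2}$, which you need in order to extract the factor $\bigl(N^{d'}\bigr)^{p/2}$ after Khintchine, does not hold; you only get $O(1)$ in place of $N^{d'}$. One can check in the simplest test case $d=d'=n=n'=1$, $Q_{1}(t)=t^{2}$, that your construction and Khintchine give only the trivial $\Dec \gtrsim 1$, rather than the required $\delta^{-(1/2-1/p)}$. Alternatively, if one shrinks to a ball where all tubes do overlap (radius $\delta^{-1}$ in the $(x_{\bft'},y_{Q'})$ variables), the volume drops from $\delta^{-d'-2n'}$ to $\delta^{-d'-n'}$ and the final exponent again comes out short.

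The paper sidesteps this issue entirely: it freezes $(x'',y'')$, works fibrewise in the $(x',y')$ variables, multiplies by a Schwartz cutoff $\phi$ at spatial scale $\delta^{-2}$, and passes from $L^{p}$ to $L^{2}$ via H\"older. The gain of $N^{d'}$ then comes from \emph{Plancherel-type} orthogonality in $L^{2}(\R^{d'+n'})$ — $\norm{\phi\sum_{\theta'} c_{\theta'} g_{\theta'}}_{2}^{2} \approx \sum_{\theta'} \abs{c_{\theta'}}^{2}\norm{\phi g_{\theta'}}_{2}^{2}$, using that the $\widehat{g_{\theta'}}$ have disjoint supports after convolving with $\widehat{\phi}$ — and this is a global statement insensitive to where the wave packets live in physical space, precisely the feature your pointwise $\ell^{2}$ argument is missing. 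The paper's tensor-product test functions are also shaped differently from yours: $\widehat{g_{\theta'}}$ is a bump at scale $\delta^{2}$ in \emph{all} of the $d'+n'$ directions (not a $\delta\times\delta^{2}$ box), so that $g_{\theta'}$ is a single wave packet on a $\delta^{-2}$-ball, matching the scale of $\phi$. Your phase-coherence computation for the inner sum over $\Box''$ is fine and does capture the $\bft''$-independence hypothesis correctly; the trouble is entirely in the outer sum over $\Box'$.
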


We postpone the proof of Proposition~\ref{prop:skew-lower-bd} till the end of this section and indicate how it recovers the lower bounds in \cite[Section 1.5]{MR4143735}.

\begin{corollary}
\label{cor:dec-lower-bd}
Let $Q_{1},\dotsc,Q_{n}$ be quadratic forms in $d$ variables and $\calK := d+2n$.
Let $\calS$ be the surface defined in \eqref{eq:S0} and let $\Dec_{\calS}(\delta,p)$ be the associated decoupling constant, defined in \eqref{eq:dec-const}.
Then, for $2 \leq p < \infty$, we have
\begin{equation}
\label{eq:dec-lower-bd}
\Dec_{\calS}(\delta,p) \gtrsim \max( \delta^{-d(1/2-1/p)}, \delta^{-d(1-1/p)+\calK/p} ).
\end{equation}
\end{corollary}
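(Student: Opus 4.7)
The plan is to derive Corollary~\ref{cor:dec-lower-bd} by applying Proposition~\ref{prop:skew-lower-bd} to the two extremal partitions of the variables and then taking the maximum of the two resulting bounds. The corollary is essentially a reformulation of Proposition~\ref{prop:skew-lower-bd} for the two trivial choices of partition; there is no genuine obstacle.

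For the first bound, I would apply Proposition~\ref{prop:skew-lower-bd} with the partition $n' = n$, $d' = d$, $n'' = 0$, $d'' = 0$. The hypothesis that $Q_{1},\dotsc,Q_{n'}$ do not depend on $t_{d'+1},\dotsc,t_{d}$ is vacuous, since the indexing range for the variables is empty. With $\calK'' = d'' + 2n'' = 0$, estimate \eqref{eq:skew-lower-bd} yields
\begin{equation}
\Dec_{\calS}(\delta,p) \gtrsim \delta^{-d(1/2-1/p)},
\end{equation}
which is the first term inside the maximum in \eqref{eq:dec-lower-bd}.

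For the second bound, I would apply Proposition~\ref{prop:skew-lower-bd} with the opposite partition $n' = 0$, $d' = 0$, $n'' = n$, $d'' = d$. Here the hypothesis is again vacuous, this time because there are no forms $Q_{1},\dotsc,Q_{n'}$ to constrain. Now $\calK'' = d + 2n = \calK$, so estimate \eqref{eq:skew-lower-bd} gives
\begin{equation}
\Dec_{\calS}(\delta,p) \gtrsim \delta^{-d(1-1/p)+\calK/p},
\end{equation}
which is the second term in the maximum. Combining the two lower bounds completes the proof of \eqref{eq:dec-lower-bd}. The only thing to double-check is the convention that empty conditions count as satisfied, which is the standard interpretation; beyond that, all the work is already done in Proposition~\ref{prop:skew-lower-bd}.
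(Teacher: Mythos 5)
Your proposal is correct and coincides with the paper's own proof: the paper likewise notes that the hypothesis of Proposition~\ref{prop:skew-lower-bd} holds trivially for either $d'=n'=0$ or $d''=n''=0$, and then reads off the two terms in the maximum.
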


\begin{proof}[Proof of Corollary~\ref{cor:dec-lower-bd} assuming Proposition~\ref{prop:skew-lower-bd}]
The hypothesis of Proposition~\ref{prop:skew-lower-bd} clearly holds for arbitrary quadratic forms $Q_{1},\dotsc,Q_{n}$ with either $d'=n'=0$ or $d''=n''=0$.
\end{proof}

By considering functions $f_{\theta}$ of tensor product form, we also obtain the following lower bound.
\begin{lemma}
\label{lem:subspace-lower-bd}
In the situation of Corollary~\ref{cor:dec-lower-bd}, if $V \leq \R^{d}$ is a linear subspace, $\tilde{Q}_{j} := Q_{j}|_{V}$, $j=1,\dotsc,n$, are restrictions of $Q_{j}$'s to $V$, and $\tilde{\calS}$ is the surface
\begin{equation}
\Set{ (\bft, \tilde{Q}_{1}(\bft), \dotsc, \tilde{Q}_{n}(\bft)) \given \abs{\bft} \leq 1 },
\end{equation}
then
\begin{equation}
\label{eq:subspace-lower-bd}
\Dec_{\calS}(\delta,p) \gtrsim \Dec_{\tilde{\calS}}(\delta,p).
\end{equation}
\end{lemma}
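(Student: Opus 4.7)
The plan is to deduce \eqref{eq:subspace-lower-bd} by lifting near-extremal test functions for $\Dec_{\tilde\calS}(\delta,p)$ to test functions for $\Dec_{\calS}(\delta,p)$ via a tensor product with a Schwartz factor in the $d'' = d - d'$ coordinates complementary to $V$. By the invariance of decoupling constants under linear changes of variables on $\R^{d}$, I may assume $V = \R^{d'}\times\{0\}^{d''}$. Splitting $\bft = (\bfr, \bfs)$ with $\bfr \in \R^{d'}$ and $\bfs \in \R^{d''}$, we have $\tilde{Q}_{j}(\bfr) = Q_{j}(\bfr, 0)$ and
\begin{equation*}
Q_{j}(\bfr,\bfs) = \tilde Q_{j}(\bfr) + B_{j}(\bfr,\bfs) + R_{j}(\bfs),
\end{equation*}
with $B_{j}$ bilinear and $R_{j}$ a quadratic form in $\bfs$ alone.

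Fix $\epsilon > 0$ and a small constant $c > 0$ depending only on the $Q_{j}$'s. For each dyadic cube $\tilde\theta \subset [0,1]^{d'}$ of side $\delta$, choose $\tilde f_{\tilde\theta}$ with Fourier support in $\Set{(\xi, \tilde Q(\xi) + \tilde\delta) \given \xi \in \tilde\theta,\ \abs{\tilde\delta_{j}} \leq c\delta^{2}}$ such that the decoupling ratio for $\tilde\calS$ using $\tilde f_{\tilde\theta}$ is at least $(1-\epsilon)\Dec_{\tilde\calS}(\delta,p)$; a standard Fourier-partition-and-modulation argument (decompose $\widehat{\tilde f_{\tilde\theta}}$ into $O(c^{-n})$ pieces supported in translates of the narrow neighborhood and align each with $\tilde\calS$ by a phase modulation) shows that restricting the fibre thickness from $\delta^{2}$ to $c\delta^{2}$ only changes the decoupling constant by a $c$-dependent multiplicative factor, which can be absorbed into the implicit constant of \eqref{eq:subspace-lower-bd}. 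Let $\phi$ be a fixed Schwartz function on $\R^{d''}$ whose Fourier transform is a smooth bump supported in $[0, c\delta^{2}]^{d''}$. For each $\tilde\theta$, set $\theta := \tilde\theta \times [0,\delta]^{d''}$ and
\begin{equation*}
f_{\theta}(x_{1}, x_{2}, y) := \tilde f_{\tilde\theta}(x_{1}, y)\, \phi(x_{2}), \qquad (x_{1}, x_{2}, y) \in \R^{d'} \times \R^{d''} \times \R^{n},
\end{equation*}
and put $f_{\theta} = 0$ on the remaining dyadic cubes in $[0,1]^{d}$. Because $\widehat{f_{\theta}}(\xi_{1}, \xi_{2}, \eta) = \widehat{\tilde f_{\tilde\theta}}(\xi_{1}, \eta)\hat\phi(\xi_{2})$ factorizes and $\sum_{\theta} f_{\theta}(x_{1}, x_{2}, y) = \phi(x_{2}) \sum_{\tilde\theta} \tilde f_{\tilde\theta}(x_{1}, y)$, Fubini yields $\norm{f_{\theta}}_{p} = \norm{\tilde f_{\tilde\theta}}_{p} \norm{\phi}_{p}$ and $\norm{\sum_{\theta} f_{\theta}}_{p} = \norm{\phi}_{p} \norm{\sum_{\tilde\theta} \tilde f_{\tilde\theta}}_{p}$, so $\norm{\phi}_{p}$ cancels from the decoupling ratio; letting $\epsilon \to 0$ gives \eqref{eq:subspace-lower-bd}.

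The only nontrivial check is that $\supp \widehat{f_{\theta}}$ lies in the required $\delta^{2}$-thickened neighborhood of $\calS$ above $\theta$. For $(\xi_{1}, \xi_{2}) \in \tilde\theta \times [0, c\delta^{2}]^{d''} \subset \theta$ and $\eta = \tilde Q(\xi_{1}) + \tilde\delta$ with $\abs{\tilde\delta_{j}} \leq c\delta^{2}$, one needs $\eta_{j} = Q_{j}(\xi_{1}, \xi_{2}) + \delta_{j}$ with $\abs{\delta_{j}} \leq \delta^{2}$; substituting the expansion above gives $\delta_{j} = \tilde\delta_{j} - B_{j}(\xi_{1}, \xi_{2}) - R_{j}(\xi_{2}) = O(c\delta^{2})$ uniformly in $\tilde\theta$, so $\abs{\delta_{j}} \leq \delta^{2}$ provided $c$ is chosen small enough. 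This is precisely why $\hat\phi$ must be concentrated in the narrow cube $[0, c\delta^{2}]^{d''}$ rather than the natural range $[0, \delta]^{d''}$ available inside $\theta$: otherwise the cross term $B_{j}(\xi_{1}, \xi_{2})$ would be of order $\delta$, far exceeding the admissible vertical thickness $\delta^{2}$, and this is the main technical obstruction the construction has to circumvent.
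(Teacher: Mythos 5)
Your proof is correct and follows the tensor-product approach the paper has in mind (the paper states the lemma with only the hint ``by considering functions $f_\theta$ of tensor product form''). Your explicit verification of the Fourier-support condition — splitting $Q_j(\bfr,\bfs)=\tilde Q_j(\bfr)+B_j(\bfr,\bfs)+R_j(\bfs)$ and observing that the cross term $B_j(\xi_1,\xi_2)$ forces $\widehat\phi$ to be concentrated at frequency scale $\delta^2$ rather than $\delta$ — is the key point, and your reduction of the fibre thickness to $c\delta^2$ (at the price of a $c$-dependent constant, which is harmless) handles it cleanly.
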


Now we can justify the lower bounds on the sharp decoupling exponents in Theorem~\ref{thm:classification+exponents}.

For surfaces \eqref{S_def}, we have $d=3$ and $\calK=7$.
Hence, \eqref{eq:dec-lower-bd} above implies the lower bounds
\begin{equation}
\label{eq:16}
\Dec_{\calS}(\delta,p) \gtrsim \max( \delta^{-3(1/2-1/p)}, \delta^{-3+10/p}).
\end{equation}
This shows the lower bounds on sharp decoupling exponents in \eqref{non-deg-decexp}.

In case~\ref{it:class:common-factor} of Theorem~\ref{thm:classification+exponents}, assume without loss of generality that $(P,Q)=(rs,rt)$.
Then we apply \eqref{eq:subspace-lower-bd} for the subspace given by $r=0$.
On this subspace, we apply \eqref{eq:dec-lower-bd} with $\tilde{d}=2$ and $\tilde{\calK}=2$.
This gives the additional lower bound
\begin{equation}
\label{eq:20}
\Dec_{\calS}(\delta,p) \gtrsim \delta^{-2(1-1/p)+2/p} = \delta^{-2+4/p}.
\end{equation}

In case~\ref{it:complete-square} of Theorem~\ref{thm:classification+exponents}, assume without loss of generality that $P=r^{2}$.
Applying \eqref{eq:skew-lower-bd} with $d'=1$ and $n'=1$, we obtain
\begin{equation}
\Dec_{\calS}(\delta,p) \gtrsim \delta^{-1\cdot (1/2-1/p)} \delta^{-2\cdot(1-1/p)+4/p} = \delta^{-5/2+7/p}.
\end{equation}
This shows the middle lower bound in \eqref{r2s2rt_decexp}.

\begin{proof}[Proof of Proposition~\ref{prop:skew-lower-bd}]
Write points in $\R^{d+n}$ as $(x',x'',y',y'') \in \R^{d'+d''+n'+n''}$.
For $\theta \in \Part{\delta}$, write $\theta = \theta' \times \theta''$.
Choose functions $f_{\theta}$ of the form $f_{\theta} = g_{\theta'}(x',y')h_{\theta}(x'',y'')$ with the following properties
\begin{enumerate}
\item $\widehat{g_{\theta'}}$ and $\widehat{h_{\theta}}$ are positive smooth functions,
\item $\int \widehat{g_{\theta'}} = \int \widehat{h_{\theta}} = 1$,
\item $\widehat{g_{\theta'}}$ is supported in a ball of radius $\approx \delta^2$,
\item $\widehat{f_{\theta}}$ is supported in and adapted to a rectangular box of dimensions
\begin{equation}
\underbrace{\delta^{2}/10 \times \dotsm \times \delta^{2}/10}_{\text{$d'$ times}} \times
\underbrace{\delta/10 \times \dotsm \times \delta/10}_{\text{$d''$ times}} \times
\underbrace{\delta^{2}/10 \times \dotsm \times \delta^{2}/10}_{\text{$n$ times}}
\end{equation}
inside the set \eqref{eq:fourier-supp}.
\end{enumerate}
Note that $g_{\theta'}$ depends only on the projection of $\theta$ onto $\R^{d'+n'}$, whereas $h_{\theta}$ has to depend on $\theta$ because of the geometry of the set \eqref{eq:fourier-supp}.

On one hand, $\norm{f_{\theta}}_{p} \sim \delta^{-(2d'+d''+2n)/p}$, and by definition we have
\begin{equation}
\norm[\big]{ \sum_{\theta\in\Part{\delta}} f_{\theta} }_{p}
\leq
\Dec_{\calS}(\delta,p) \Bigl( \sum_{\theta\in\Part{\delta}} \norm{f_{\theta}}_{p}^{p} \Bigr)^{1/p}
\sim
\Dec_{\calS}(\delta,p) \delta^{-d/p} \delta^{-(2d'+d''+2n)/p}.
\end{equation}
On the other hand,
\begin{equation}
\begin{split}
\norm[\big]{ \sum_{\theta\in\Part{\delta}} f_{\theta} }_{p}
&\gtrsim
\inf_{\substack{x''\in\R^{d''},y''\in\R^{n''},\\\abs{x''},\abs{y''} \leq 1/100}} \norm[\big]{ \sum_{\theta\in\Part{\delta}} f_{\theta} }_{L^{p}(\R^{d'} \times \Set{x''} \times \R^{n'} \times \Set{y''})}
\\ &=
\inf_{\substack{x''\in\R^{d''},y''\in\R^{n''},\\\abs{x''},\abs{y''} \leq 1/100}} \norm[\big]{ \sum_{\theta'} c_{\theta',x'',y''} g_{\theta'} }_{L^{p}(\R^{d'} \times \R^{n'})}
\end{split}
\end{equation}
where $c_{\theta',x'',y''} := \sum_{\theta''} h_{\theta}(x'',y'')$ is independent of $x',y'$ and satisfies 
\begin{equation}
\abs{ c_{\theta',x'',y''} } \sim \delta^{-d''}
\end{equation}
uniformly in $\theta'$ and $|x''|, |y''| \leq 1/100$. This is because there is almost no cancellation in the sum over $\theta''$. 

Let $\phi = \eta(\delta^{2}\cdot)$, where $\eta$ is a fixed positive Schwartz function on $\R^{d'}\times\R^{n'}$ with $\supp \widehat{\eta} \subset B(0,1/10)$.
Then, by H\"older's inequality,
\begin{equation}
\begin{split}
\norm[\big]{ \sum_{\theta'} c_{\theta',x'',y''} g_{\theta'} }_{L^{p}(\R^{d'}\times\R^{n'})}
&\geq
\norm{\phi}_{1/(1/2-1/p)}^{-1}
\norm[\big]{ \phi \sum_{\theta'} c_{\theta',x'',y''} g_{\theta'} }_{L^{2}(\R^{d'}\times\R^{n'})}
\\ &\sim
\delta^{2\cdot (d'+n')(1/2-1/p)} \norm[\big]{ \sum_{\theta'} c_{\theta',x'',y''} \phi g_{\theta'} }_{L^{2}(\R^{d'}\times\R^{n'})}.
\end{split}
\end{equation}
Since the Fourier supports of $\phi g_{\theta'}$  are disjoint for different $(\theta')$'s, we obtain
\begin{equation} 
\label{eq:orthogonality_in_theta'}
\begin{split}
\norm[\big]{ \sum_{\theta'} c_{\theta',x'',y''} \phi g_{\theta'} }_{L^{2}(\R^{d'}\times\R^{n'})}
&=
\Bigl( \sum_{\theta'} \abs{ c_{\theta',x'',y''} }^2 \norm[\big]{ \phi g_{\theta'} }_{L^{2}(\R^{d'}\times\R^{n'})}^{2} \Bigr)^{1/2} \\
&\sim 
\delta^{-d'/2} \cdot \delta^{-d''} \cdot \delta^{-2 \cdot (d'+n')/2},
\end{split}
\end{equation}
uniformly for $|x'|, |y'| \leq 1/100$.
Combining the above estimates, we obtain
\begin{equation}
\Dec_{\calS}(\delta,p) \delta^{-d/p} \delta^{-(2d'+d''+2n)/p}
\gtrsim
\delta^{2\cdot (d'+n')(1/2-1/p)}
\cdot
\delta^{-d'/2} \cdot \delta^{-d''} \cdot \delta^{-2 \cdot (d'+n')/2}.
\end{equation}
This implies the claim \eqref{eq:skew-lower-bd}.
\end{proof}

\appendix
\section{Facts about decoupling inequalities}

\subsection{Interpolation}
\label{sec:interpolation}
It is well-known that decoupling inequalities can be interpolated by the argument in \cite[Proposition 6.2]{MR3374964}.
However, as observed e.g.\ in \cite[Appendix B]{GZo18}, a simpler argument is available when decoupling inequalities are stated for functions satisfying the relaxed Fourier support condition \eqref{eq:fourier-supp}.
We record this simpler argument in this appendix.

Let $\delta \in (0,1]$ be a dyadic number.
For every dyadic cube $\Box \in \Part{\delta}$, let $\calU_{\Box}$ be the smallest rectangular box that contains the uncertainty region \eqref{eq:fourier-supp}.
Then, using the decoupling inequality \eqref{eq:dec-const} at a scale slightly larger than $\delta$, we see that the estimate
\begin{equation}
\label{eq:13}
\norm[\Big]{\sum_{\Box \in \Part{\delta}} f_{\Box} }_{L^p(\R^{d+n})}
\lesssim \Dec_{\mc{S}_0}(C\delta, p)
\Big(\sum_{\Box \in \Part{\delta}} \norm{f_{\Box}}_{L^p(\R^{d+n})}^{p} \Big)^{1/p}
\end{equation}
continues to hold for arbitrary functions $f_{\Box}$ with $\supp \widehat{f_{\Box}} \subseteq 2\calU_{\Box}$, where $2\calU_{\Box}$ are rectangular boxes with the same center and orientation but twice the side lengths.
Let $\psi_{\Box}$ be Schwartz functions such that $\one_{\calU_{\Box}} \leq \widehat{\psi}_{\Box} \leq \one_{2\calU_{\Box}}$ that are scaled, rotated, and modulated copies of a fixed Schwartz function.
Then it follows that
\begin{equation}
\label{eq:14}
\begin{split}
\norm[\Big]{\sum_{\Box \in \Part{\delta}} \psi_{\Box} * g_{\Box} }_{L^p(\R^{d+n})}
&\lesssim \Dec_{\mc{S}_0}(C\delta, p)
\Big(\sum_{\Box \in \Part{\delta}} \norm{\psi_{\Box} * g_{\Box}}_{L^p(\R^{d+n})}^{p} \Big)^{1/p}
\\ &\lesssim \Dec_{\mc{S}_0}(C\delta, p)
\Big(\sum_{\Box \in \Part{\delta}} \norm{g_{\Box}}_{L^p(\R^{d+n})}^{p} \Big)^{1/p}
\end{split}
\end{equation}
for \emph{arbitrary} functions $g_{\Box}$, where the first inequality holds by \eqref{eq:13} and the second by the Young convolution inequality.
Since the estimate \eqref{eq:14} holds for arbitrary functions $g_{\Box} \in L^{p}(\R^{d+n})$, we may use the complex interpolation theorem for linear operators on $L^{p}$ spaces to conclude that
\begin{equation}
\label{eq:dec-interpolation}
\Dec(\delta,p_{\theta})
\lesssim
\Dec(C\delta,p_{0})^{1-\theta} \Dec(C\delta,p_{1})^{\theta},
\end{equation}
with the usual conventions
\begin{equation}
\tfrac{1}{p_{\theta}} = \tfrac{1-\theta}{p_{0}} + \tfrac{\theta}{p_{1}},
\quad
0 \leq \theta \leq 1,
\end{equation}
for any $2 \leq p_{0},p_{1} \leq \infty$.

As a consequence, $\alpha \mapsto \gamma_{P,Q}(1/\alpha)$ is a convex function on $[0,1/2]$.

\subsection{Flat decoupling}
The same argument as above shows that, for an arbitrary collection $U$ of parallelepipeds $\calU$ in $\R^{d}$ such that $2\calU$ have bounded overlap, we have
\begin{equation}
\label{eq:flat-dec}
\norm[\Big]{\sum_{\calU\in U} f_{\calU} }_{L^p(\R^{d})}
\lesssim \abs{U}^{1-2/p}
\Big(\sum_{\calU \in U} \norm{f_{\calU}}_{L^p(\R^{d})}^{p} \Big)^{1/p}
\end{equation}
for arbitrary functions $f_{\calU}$ with $\supp \widehat{f_{\calU}} \subseteq \calU$.
The inequality \eqref{eq:flat-dec} is called ``flat decoupling'' because it does not require any curvature assumption on the Fourier supports. This inequality was already observed and applied in Tao and Vargas \cite{MR1748920,MR1748921}. 

\printbibliography

\label{endofpaper}
\end{document}